\title{Categoricity of the two sorted $j$-function}
\author{
        Adam Harris \\
        Oxford University
}
\date{\today}
\newtheorem{thm}{Theorem}[section]
\newtheorem{cor}[thm]{Corollary}
\newtheorem{lem}[thm]{Lemma}
\newtheorem{pro}[thm]{Proposition}
\theoremstyle{remark}
\newtheorem{rem}[thm]{Remark}
\theoremstyle{remark}
\theoremstyle{definition}
\newtheorem{dfn}[thm]{Definition}
\theoremstyle{definition}
\def\fsl{\mathfrak{sl}}
\def\tc{\widetilde{\mathbb{C}}}
\def\M{\mathbb{ M}}
\def\G{\mathbb{ G}}
\def\N{\mathbb{ N}}
\def\Z{\mathbb{ Z}}
\def\Q{\mathbb{ Q}}
\def\C{\mathbb{ C}}
\def\P{\mathbb{ P}}
\def\A{\mathbb{ A}}
\def\H{\mathbb{ H}}
\def\*{{}^{*}}
\def\cL{\mathcal{L}}
\def\cM{\mathcal{M}}
\def\cC{\mathcal{C}}
\def\cH{\mathcal{H}}
\def\cF{\mathcal{F}}
\def\cK{\mathcal{K}}
\def\cN{\mathcal{N}}
\def\Lra{\Leftrightarrow}
\def\ra{\rightarrow}
\def\lora{\longrightarrow}
\def\gl2q{\text{\textnormal{GL}}_2^+(\Q)}
\def\ggl2{\text{\textnormal{GL}}_2}
\def\GL{\text{\textnormal{GL}}}
\def\SL{\text{\textnormal{SL}}}
\def\sl2{\text{\textnormal{SL}}_2}
\def\psl2{\text{\textnormal{PSL}}_2}
\def\sv2{\text{SZ}_2}
\def\z2{\text{Z}_2}
\def\pgl2{\text{\textnormal{PGL}}_2}
\def\m2z{\text{M}_2(\Z)}
\def\acl{\text{\textnormal{acl}}}
\def\cl{\text{\textnormal{cl}}}
\def\tp{\text{\textnormal{tp}}}
\def\qftp{\text{\textnormal{qftp}}}
\def\tor{\text{\textnormal{Tor}}}
\def\Loc{\text{\textnormal{Loc}}}
\def\Th{\text{\textnormal{Th}}}
\def\T{\text{\textnormal{T}}}
\def\End{\text{\textnormal{End}}}
\def\trdeg{\text{\textnormal{trdeg}}}
\def\MT{\text{\textnormal{MT}}}
\def\Hg{\text{\textnormal{Hg}}}
\def\CycSub{\text{\textnormal{CycSub}}}
\def\Cov{\text{\textnormal{Cov}}}
\def\Gal{\text{\textnormal{Gal}}}
\def\Aut{\text{\textnormal{Aut}}}
\def\Fet{\text{\textnormal{F\'et}}}
\def\dim{\text{\textnormal{dim}}}
\def\dcl{\text{\textnormal{dcl}}}
\def\Fet{\text{\textnormal{F\'et}}}
\def\g0n{\Gamma_0(N)}
\def\lora{\longrightarrow}
\def\ggl2{\textrm{GL}_2}
\def\GL{\textrm{GL}}
\def\sl2{\textrm{SL}_2}
\def\sv2{\textrm{SZ}_2}
\def\z2{\textrm{Z}_2}
\def\pgl2{\textrm{PGL}_2}
\def\m2z{\textrm{M}_2(\Z)}
\begin{document}

\maketitle

\begin{abstract}
We show that a natural, two sorted $\cL_{\omega_1,\omega}$ theory involving the modular $j$-function is categorical in all uncountable cardinaities. It is also shown that a slight weakening of the adelic Mumford-Tate conjecture for products of elliptic curves is necessary and (along with a couple of other results from arithmetic geometry) sufficient for categoricity.
\end{abstract}

\section{Introduction}

It is a general theme in showing natural mathematical structures have nice model theoretic properties, that the required model theoretic properties (for example homogenity and atomicity properties required for categoricity or stability), correspond directly to known theorems of classical mathematics. The fact that these theorems correspond to model theoretic notions, might be seen as providing some new justification as to why they should hold. \newline

Consider the upper half plane $\H$ with the group $\gl2q$ acting on it via
$$\begin{pmatrix} a&b\\ c&d \end{pmatrix} \tau = \frac{a \tau + b}{c \tau + d},$$
along with the modular $j$-function going from $\H$ onto the complex numbers $\C$. We wish to show that the $j$-function is a `canonical mathematical object' by showing that it has a natural, categorical axiomatization in some language. Some possible intuition behind this setup is that $\H$ is nearly the universal cover of $\C$, which is complex analytically isomorphic to $\sl2(\Z) \backslash \H$ via $j$. $\gl2q$ is a group of analytic automorphisms on the cover $\H$, and we might want to study the action of each $\alpha \in \gl2q$ down in $\C$. Here, the Shimura variety $\C$ is the moduli space of elliptic curves, and the action of $\gl2q$ on its cover gives additional information regarding isogenies. \newline

Consider a first order language $\cL$ for (two sorted) structures of the form
$$\cM = \langle \langle H ; \{g_i \}_{i \in \N}  \rangle, \langle F,+,\cdot , 0, 1 \rangle, j:H \ra F \rangle $$ where the structure $ \langle H ; \{g_i \}_{i \in \N}\rangle$ is a set $H$ with countably many unary function symbols, $ \langle F;+,\cdot, 0, 1 \rangle$ is an algebraically closed field of characteristic zero, and the function $j$ goes from $H$ to $F$. \newline

Let $\Th(j)$ be complete first order theory of  standard $j$-function in the language described above i.e. the first order theory of the `standard model' $$\C_j:=\langle \langle \H, G \rangle , \langle \C , + , \cdot , 0 , 1 \rangle , j: \H \ra \C \rangle.$$

We also define the $\cL_{\omega_1, \omega}$ axiom
$$\textrm{StandardFibres}: \ \ \ \forall x \forall y \ \left(  j(x)=j(y)\ra\bigvee_{\gamma \in \sl2(\Z)}x=\gamma(y) \right)$$
which fixes a fibre of $j$ to be an $\sl2(\Z)$-orbit.
Let $\T_{\omega_1,\omega}(j):=\Th(j) \cup \textrm{StandardFibres}$. We sometimes abbreviate $\textrm{StandardFibres}$ by $\textrm{SF}$. \newline

In this note, we prove the following:
\begin{thm}\label{thmcat}
The theory of the $j$-invariant $\T_{\omega_1, \omega}(j)+\trdeg(F) \geq \aleph_0$ has a unique model (up to isomorphism) in each infinite cardinality. 
\end{thm}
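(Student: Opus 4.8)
The plan is to realise an arbitrary model $\cM\models\T_{\omega_1,\omega}(j)+\trdeg(F)\geq\aleph_0$ as a \emph{quasiminimal pregeometry structure} (in the sense of Zilber and Kirby) and then to apply the theorem that, for each infinite dimension, there is up to isomorphism exactly one such structure of that dimension. First I would extract the coarse shape of an arbitrary model from $\Th(j)$: the sort $\langle F,+,\cdot,0,1\rangle$ is an algebraically closed field of characteristic $0$; $j$ is surjective; each $g_i$ realises a fixed element of $\gl2q$ (the family $\{g_i\}$ enumerating $\gl2q$) acting on $H$ compatibly with $j$; and, crucially, for every $g\in\gl2q$ the value $j(gx)$ satisfies the appropriate modular polynomial over $j(x)$, hence is algebraic over $j(x)$. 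StandardFibres then makes each fibre of $j$ a single $\sl2(\Z)$-orbit, hence countable, so $|\cM|=|F|=\trdeg(F)$; it therefore suffices to prove that two models of the theory of equal transcendence degree are isomorphic.

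Next I would fix the pregeometry. For $A\subseteq\cM$ let $\cl(A)$ be the smallest substructure containing $A$ closed under the field operations, under algebraic closure inside $F$, under the maps $g_i$, and under choosing $j$-preimages of elements of its field part (which preimage is immaterial, as fibres are countable); the countable closure property is then immediate. Of the pregeometry axioms only exchange needs attention, and here the observation above pays off: since every $j$-translate of $x\in H$ has $j$-value algebraic over $j(x)$, the field part of $\cl(Cx)$ is exactly $\overline{F_C(j(x))}$, so whether an $H$-element lies in $\cl(Cx)$ depends only on algebraicity of its $j$-value over $F_C$, and exchange reduces to exchange for transcendence degree in $F$. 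To be certain this description of $\cl$ is correct -- i.e. that there are no unexpected algebraic relations among the $j$-values of $\gl2q$-inequivalent points -- I would invoke the Ax-Lindemann-Weierstrass theorem for the $j$-function of Pila, whose relevant consequences are first-order and hence belong to $\Th(j)$.

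The heart of the matter, and the step I expect to be the main obstacle, is the homogeneity axiom: over a countable closed set $C$, any two finite tuples with the same quantifier-free type lie in one $\Aut(\cM/C)$-orbit, and types over $\cl(\emptyset)$ are isolated (the latter clause also settles the countable case directly). By completeness of $\Th(j)$ together with StandardFibres, the quantifier-free type of an $H$-tuple over $C$ is already "the expected one": it is generated by the $\sl2(\Z)$-orbit relations, the modular equations relating the $j$-values of the $\gl2q$-translates, and the field type over $F_C$ of those finitely many $j$-values. Upgrading such a quantifier-free type to a genuine type -- realising every consistent configuration, and moving any two realisations to each other by an automorphism -- is a thumbtack/Kummer-theory statement: that $\Aut(\cM/C)$ acts on the tower of modular fibres over $C$ (equivalently, on the adelic Tate module of the product of elliptic curves attached to the $j$-values concerned) as largely as the unavoidable constraints of isogeny, complex multiplication and field of definition allow. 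This is precisely the assertion that the paper's "necessary and sufficient" analysis identifies with the stated slight weakening of the adelic Mumford-Tate conjecture for products of elliptic curves; granting it, together with Serre's open image theorem and the remaining arithmetic-geometry inputs, the homogeneity axiom follows.

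Having checked that $(\cM,\cl)$ is a quasiminimal pregeometry structure, I would conclude: for each infinite dimension there is a unique such structure, and the finite-dimensional ones (all countable and pairwise non-isomorphic) are excluded by $\trdeg(F)\geq\aleph_0$; since for models of our theory dimension equals $\trdeg(F)$ equals $|\cM|$, this gives a unique model in every infinite cardinality. Equivalently, one can bypass the general framework and construct the isomorphism between two models by a transfinite back-and-forth through countable closed substructures, using the pregeometry to keep the partial isomorphisms closed, the Kummer/homogeneity statement to extend them, and unions at limit stages.
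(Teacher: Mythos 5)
Your architecture is the paper's own: the closure operator is in essence $j^{-1}\circ\acl\circ j$, the framework is quasiminimal pregeometry classes, and everything reduces to homogeneity over countable closed sets and over $\dcl(\emptyset)$. Two side remarks before the main point. First, the Ax--Lindemann--Weierstrass input is not needed: exchange is inherited directly from exchange for $\acl$ in $ACF_0$ once one knows that each $j(gx)$ satisfies a modular polynomial over $j(x)$; no statement about the \emph{absence} of further relations is required for the pregeometry axioms. Second, your closing claim that one can ``bypass the general framework'' by a transfinite back-and-forth through countable closed substructures is false above $\aleph_1$: a union of a chain of countable closed sets is countable, and promoting homogeneity over countable closed sets to homogeneity over uncountable ones is precisely the content of excellence, supplied here by the theorem of Bays--Hart--Hyttinen--Kes\"al\"a--Kirby, not by a back-and-forth.

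The genuine gap is the homogeneity step itself: you identify it with an adelic open-image statement and then write ``granting it \dots the homogeneity axiom follows.'' The theorem is unconditional, so this cannot be granted; proving it in the form actually needed is where essentially all of the work lies, and the form needed is stronger than Serre's theorem. Concretely: (i) homogeneity over a countable closed \emph{submodel} requires the open-image statement for a product of pairwise non-isogenous elliptic curves defined over a finitely generated extension of a countable algebraically closed subfield of $\C$ --- a function-field case, obtained from the number-field case (Serre, plus Ribet's commutator-subgroup reduction from $r$ factors to $2$) by a spreading-out argument and a linear-disjointness/regular-extension argument; (ii) homogeneity over $\dcl(\emptyset)=\Q(j(S))$ requires controlling the interaction with \emph{all} special values simultaneously, which comes from the fact that $\Q^{cyc}(j(S))/\Q^{cyc}$ is abelian together with the commutator-subgroup condition for $\SL_2(\hat{\Z})^r$; and (iii) one must still convert openness of the image into the extension step: choose $m$ so that every element of $\SL_2(\hat{\Z})$ fixing the $m$-torsion of $E_{j(\tau)}$ is realised by Galois over $K^{cyc}(\tor(A_{j(\bar{h})}))$, and then use the geometric irreducibility over $\Q(j(S))$ of the image of $z\mapsto(j(g_1z),\dots,j(g_nz))$ --- a fact which is part of $\Th(j)$ and hence transfers to the second model --- to produce $\tau'$ agreeing with $\tau$ on the finitely many relevant Hecke translates. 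None of (i)--(iii) appears in your proposal, and (i) in particular cannot be discharged by citing Serre's open image theorem, which is a statement over number fields.
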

Technically, the above means that given two models $$\cM = \langle \cH  , \cF, j:H \ra F \rangle \  , \  \cM' = \langle \cH'  , \cF', j':H' \ra F' \rangle$$ of the same infinite cardinality, where 
$$\cH=\langle H ; \{g_i \}_{i \in \N}  \rangle  \textrm{ and } \cF=\langle F,+,\cdot , 0, 1 \rangle,$$ there are isomorphisms $\varphi_H$ and $\varphi_F$ such that the following diagram commutes:
$$
\xymatrix{
\cH \ar[r]^{\varphi_H} \ar[d]^j &    \cH' \ar[d]^{j'}
\\
\cF \ar[r]^{\varphi_F}  & \cF'
}
$$
Since the theory of algebraically closed fields is categorical in all uncountable cardinalities, we may identify the two fields at the bottom with an arbitrary isomorphism and just think about two `coverings' of an algebraically closed field. \newline

The main result from outside of model theory required in proving the above is an instance of the Mumford-Tate conjecture for products of elliptic curves. We actually go further and show that (a slight weakening of) this result is necessary for $\aleph_1$-catgeoricity.

\subsection{Strategy}

For categoricity we will ultimately appeal to the theory of quasiminimal excellence as in \cite{kirby2010quasiminimal} and \cite{bays2012quasiminimal} . The basic idea is as follows:\newline

We wish to build an isomorphism between two models $\cM$ and $\cM'$. Since the first order theory is complete we may identify $\dcl^{\cM}(\emptyset)$ with $\dcl^{\cM'}(\emptyset)$ and by quantifier elimination this means we always work `over the special points'. We build an isomorphism by a back an forth argument which may be seen as an abstraction of the method used to extend automorphisms in algebraically closed fields in classical algebra. At a certain stage in the back and forth argument, we assume that we have a partial isomorphism
$$\langle \bar{x} \rangle \cong \langle \bar{x}' \rangle $$
where $\langle \bar{x} \rangle$ is the substructure generated by $\bar{x}$. Then we take a new element $y \in \cM$ and we wish to find an element $y' \in \cM'$ which satisfies the same quantifier free formulae (with parameters in $\bar{x}$) as $y$. Note that we have used the fact that
$$\qftp(\bar{a}) = \qftp(\bar{b}) \Lra \langle \bar{a} \rangle \cong \langle \bar{b} \rangle.$$
From here the argument is two pronged:
We first show that we can realize the field type of a finite subset of a Hecke orbit over any parameter set (this is a result regarding algebraicity of modular curves, which gives a strong connection between the two sorts \ref{lem_locus_irreducible}), and then we show that all the information in the type is contained in this `finite' part (an adelic Mumford-Tate (open image) kind of result, \ref{amtapp}). In particular, if we take a point $\tau \in \H$, then $\tau$ corresponds to some elliptic curve $E$, and the type of $\tau$ is determined by algebraic relations between the torsion points of $E$, i.e. it is determined by the Galois representation on the Tate module of $E$. An adelic version of Serre's open image theorem (i.e. the adelic Mumford-tate for elliptic curves) tells us that there are no unexpected relations between torsion points as $N$ gets large, and so the type is determined by relations between a finite subset of the $N$-torsion and the type is (nearly) isolated.\newline

\subsection{Notation}
\begin{itemize}
\item For an abelian variety $A$ define $A[N]$ to be the $N$-torsion, $\tor(A):=\cup_N A[N]$, $T_l(A)$ is the $l$-adic Tate module of $A$ and $T(A):=\prod_l T_l$ is the Adelic Tate module. We also let $\MT(A)$ be the Mumford Tate group of $A$ and $\Hg (A)$ be the Hodge group (sometimes called the special Mumford-Tate group - see \cite[5.8]{moonen2004introduction});
\item For $\tau \in \H$ we define $E_{j(\tau)}$ to be (the $\C$ points of) an elliptic curve which has $j$-invariant $j(\tau)$ and is defined over $\Q(j(\tau))$. For example, we may take the elliptic curve defined by 
$$y^2=4x^3-c x - c \ \ \ \ \ c=\frac{27j(\tau)}{j(\tau)-1}$$
(all fields are characteristic 0 unless stated othersise). For $\bar{\tau}= \langle \tau_1,...,\tau_n \rangle \in \H$ we associate the corresponding abelian variety $$A_{j(\bar{\tau})}:=E_{j(\tau_1)}\times \cdots \times E_{j(\tau_n)}$$ defined over $\Q(j(\tau_1),...,j(\tau_n))$;
\item For a subset $G'$ of $G$ we will write $j(g \tau)$ for $\{j(g \tau) \ | \ g \in G' \}$. We note that the scalar matrices act trivially on $\H$, and define $G_N$ to be those matrices in $G$ such that if you multiply through by an integer to clear denominators in all entries, then the determinant is $N$. Equivalently $G_N=\Gamma  \begin{pmatrix} N&0 \\ 0&1 \end{pmatrix} \Gamma$ where $\Gamma:=\sl2(\Z)$. We define $G_0:= \emptyset$ and we also sometimes denote $G$ by $G$. For $\tau \in \H$, $j(G \tau)$ is called a \textit{`Hecke orbit'};
\item If $s \in H$ is (the unique element) fixed by some $g \in G$, then $s$ is called {\it`special'}. The set of all special points in $H$ is denoted $S$. In this situation, we also say that $j(s)$ is special;
\item For a field $K$ we let $G_K$ be the absolute Galois group $\Gal(\bar{K} / K)$, and $K^{cyc}:=K(\tor(\C^{\times}))$ the field obtained by adjoining all roots of unity to $K$;
\item For $(x_1,...,x_n) \in \C^n$ and $K$ a subfield of $\C$, we let $\Loc( (x_1,...,x_n) / K)$ be the minimal (irreducible) algebraic variety over $K$ containing $(x_1,...,x_n)$ (i.e. the `Weil locus' of $(x_1,...,x_n)$ over $K$);
\item We note that the centre of $\gl2q$ (scalar matrices) act trivially on $\H$, so we will actually consider the group 
$$G:=\gl2q^{ad}=\gl2q / Z(\gl2q).$$
This is not essential, but this gives us a faithful action and makes our axiomatizatisation cleaner.
\end{itemize}

\section{Geometry and arithmetic}

In this section we include all the background results in arithmetic geometry which are needed to set the scene, and also those which are required to prove our model theoretic result. For the relevant background on modular curves we refer the reader to \cite{diamond2005first}.

\subsection{The `Galois closure' of $Y_0(N)$}\label{secgalcl}

Consider the congruence subgroup
$$G(N):=\left\{  \begin{pmatrix} a&b \\ c&d \end{pmatrix} \in \SL_2(\Z) \ | \ a \equiv b \mod N, \ c \equiv d \equiv 0 \mod N \right\},$$
which is a normal subgroup of $\SL_2(\Z)$, and let
$$Y_0'(N):=G(N)\backslash \H.$$

\begin{pro}
A disjoint union of two copies of $Y_0'(N)$ is the moduli space of isomorphism classes of elliptic curves with a pair of disjoint cyclic subgroups of order $N$. Considered as a covering space of $Y_0'(1) \cong \C$ we have
$$\Aut_{\Cov}(Y'_0(N) / Y_0'(1)) \cong \psl2(\Z / N \Z).$$
\end{pro}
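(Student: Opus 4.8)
The plan is to treat the two assertions in turn. For the moduli description I would work through the classical complex-analytic dictionary: every elliptic curve over $\C$ is $E_\tau = \C/(\Z+\Z\tau)$ for some $\tau\in\H$, two such curves are isomorphic exactly when the parameters lie in the same $\SL_2(\Z)$-orbit, and a level-$N$ structure on $E_\tau$ is a piece of combinatorial data on the group $E_\tau[N]=\tfrac1N(\Z+\Z\tau)/(\Z+\Z\tau)\cong(\Z/N\Z)^2$, on which $\SL_2(\Z)$ acts through reduction modulo $N$. I would fix the \emph{standard} pair of disjoint cyclic subgroups $C_1^0=\langle\tfrac1N\rangle$ and $C_2^0=\langle\tfrac{\tau}{N}\rangle$ of $E_\tau[N]$ and verify by a direct matrix computation that its stabiliser in $\SL_2(\Z)$ is precisely $G(N)$; this is exactly where the congruence conditions defining $G(N)$ enter, and it identifies $G(N)\backslash\H$ with the set of isomorphism classes of triples $(E,C_1,C_2)$ lying in a single $\SL_2(\Z)$-orbit of such pairs. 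That the full moduli space is a disjoint union of two such orbits I would see by attaching to $(E,C_1,C_2)$ the discrete invariant furnished by the Weil pairing $e_N\colon C_1\times C_2\to\mu_N$ together with the canonical orientation of a complex elliptic curve, and checking that $\SL_2(\Z)$ has exactly two orbits on the complementary cyclic pairs carrying this datum, each orbit accounting for one copy of $Y_0'(N)$.

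For the deck group, I would use that $G(N)$ is normal in $\SL_2(\Z)$ (as recorded in its definition), so that the induced map $Y_0'(N)=G(N)\backslash\H\to Y_0'(1)=\SL_2(\Z)\backslash\H$ is a Galois branched covering with group of deck transformations $\SL_2(\Z)/(\pm G(N))$: for $N\ge 2$ the group $G(N)$ acts on $\H$ with stabilisers contained in $\{\pm\mathrm{id}\}$, so this is an honest covering away from the two elliptic points $j=0$ and $j=1728$, and one extends the deck transformations over those points by the Riemann removable-singularity theorem; the case $N=1$ is trivial. Finally I would identify $\SL_2(\Z)/(\pm G(N))$ with $\psl2(\Z/N\Z)$ via reduction modulo $N$: this is a surjection $\SL_2(\Z)\twoheadrightarrow\SL_2(\Z/N\Z)$, and a short check against the congruence conditions defining $G(N)$ shows that $\pm G(N)$ is exactly the preimage of the centre $\{\pm I\}$, whence the quotient is $\SL_2(\Z/N\Z)/\{\pm I\}=\psl2(\Z/N\Z)$.

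The step I expect to be the main obstacle is the bookkeeping behind the phrase \emph{two copies}: making precise which discrete invariant of $(E,C_1,C_2)$ separates the two components, and checking that $\SL_2(\Z)$ acts with exactly two orbits — rather than one, or $\varphi(N)$ — on the relevant set of level structures. Once this orbit count and the behaviour of the covering at the elliptic points are settled, the identification of the deck group is routine finite group theory, and the analytic uniformization supplies the rest.
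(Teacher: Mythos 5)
Your route --- uniformize, fix the standard pair $C_1^0=\langle 1/N\rangle$, $C_2^0=\langle \tau/N\rangle$, compute its stabiliser, count $\sl2(\Z)$-orbits, then get the deck group from normality of $G(N)$ and reduction mod $N$ --- is the same classical one that the paper compresses into three sentences, and the deck-group half is fine in outline. But the two steps you yourself flag are exactly where the argument breaks, and they break \emph{against each other}. First, the stabiliser in $\sl2(\Z)$ of the ordered pair $(\langle e_1\rangle,\langle e_2\rangle)$ is $\{\gamma=\left(\begin{smallmatrix}a&b\\ c&d\end{smallmatrix}\right) : b\equiv c\equiv 0 \bmod N\}=\Gamma_0(N)\cap\Gamma^0(N)$, whose image in $\sl2(\Z/N\Z)$ is the full diagonal torus of order $\varphi(N)$; this subgroup is not normal in $\sl2(\Z)$ for $N>2$, so it cannot coincide with the $G(N)$ whose normality you invoke to make the covering Galois. (The congruences printed in the paper are themselves garbled --- they force $\det\equiv 0\bmod N$ --- but whatever $G(N)$ is, it must reduce into the centre of $\sl2(\Z/N\Z)$ for the quotient to be $\psl2(\Z/N\Z)$, hence it is strictly smaller than the stabiliser of a pair of subgroups.) Second, the orbit count: $\sl2(\Z)$ acts \emph{transitively} on ordered pairs of disjoint cyclic subgroups of order $N$ --- choose generators $P,Q$, rescale $Q$ by $\det(P\,Q)^{-1}$ to get a unimodular basis, and lift from $\sl2(\Z/N\Z)$ --- so there is one orbit, not two; and your proposed separating invariant $e_N\colon C_1\times C_2\to\mu_N$ is not defined on the subgroups, since rescaling generators by units $u,v$ changes $e_N(P,Q)$ by the exponent $uv$, which sweeps out all of $(\Z/N\Z)^{\times}$.

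The repair, which is what the paper's one-line proof is gesturing at, is that $G(N)\backslash\H$ does not parametrise pairs of subgroups but (essentially) pairs of \emph{generators}: two disjoint cyclic subgroups of order $N$ automatically span $E[N]$, so choosing generators $B_1,B_2$ gives a basis, and the datum actually classified is this basis modulo the scalars to which $G(N)$ reduces (a simple count confirms the mismatch: the fibre of the space of pairs of subgroups over a generic $j$ has $|\sl2(\Z/N\Z)|/\varphi(N)$ points, while two copies of $Y_0'(N)$ have $|\sl2(\Z/N\Z)|$). The factor of two and the quadratic subfield $\Q(\eta_N)\subset\Q(\mu_N)$ then come from the Weil pairing evaluated on the rigidified datum --- on a basis up to sign $e_N(-B_1,-B_2)=e_N(B_1,B_2)$, so the pairing \emph{is} an invariant there --- rather than from an orbit count on pairs of subgroups. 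So you should restate the moduli problem with generators retained up to the appropriate scalars, recompute the stabiliser (which is then a scalar congruence subgroup, visibly normal), and only afterwards ``forget which cyclic subgroup each generator came from''; your treatment of the elliptic points and the identification of $\sl2(\Z)/\{\gamma\equiv\pm I \bmod N\}$ with $\psl2(\Z/N\Z)$ then go through, subject to the caveat the paper itself raises two paragraphs later about nontrivial square roots of unity mod $N$.
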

\begin{proof}
To see this, we note that if two disjoint cyclic subgroups of $E[N]$ are generated by $B_1$ and $B_2$, then $B_1$ and $B_2$ are a basis for $E[N]$. We then forget which cyclic subgroups they came from. The reason you need two copies is due to the Weil pairing inducing an arithmetic action on a quadratic subfield $\Q(\eta_N)$ of $\Q(\mu_N)$.
\end{proof}

Let $\Gamma: = \SL_2(\Z)$. In general, a tuple $\bar{g} =(g_1,...,g_n) \subset G$ determines a congruence subgroup of $\Gamma$
$$\Gamma_{\bar{g}}  := g_1^{-1} \Gamma g_1 \cap \cdots g_n^{-1} \Gamma g_n.$$
Notice that this subgroup is independent of the ordering of the tuple and therefore it is really the set $\{g_1,...,g_n \}$ which determines the group $\Gamma_{\bar{g}} $. The quotient
$$ \Gamma_{\bar{g}}   \backslash \H$$
is an algebraic curve $V_{\bar{g}} $ (see for example \cite[Chapter 2]{diamond2005first}). If for the tuple $\bar{g}$, we take $g_1 = 1$ and the rest of the $g_i$ a set of ($\psi(N)$ many) coset representatives for $\Gamma \backslash G_N$ then it is easy to see that $$\Gamma_{\bar{g}} = G(N).$$

Via the modular polynomial $\Phi_N(X,Y) \in \Z[X,Y]$, the curve $Y_0(N)$ has a model $Y_0(N)_{\Q}$ over $\Q$. Note that the function field of $Y_0(N)$ is
$$\C(Y_0(N))=\C(j,j\circ N)$$
and the function field of $Y_0(N)_{\Q}$ is
$$\Q(Y_0(N))=\Q(j,j\circ N).$$
Now consider the curve with function field
$$ \C(j, \{j\circ g\}_{g \in G_N} )=\textrm{Normalclosure}(\C(j,j \circ N))$$
and, via $\Phi_N$ again, consider its $\Q$-model $\overline{Y_0(N)}_{\Q}$. Now consider the map
$$\tau \mapsto (j(\tau) , j(g_1 \tau),...,j(g_{\psi(N)} \tau))$$
for $g_i \in \Gamma \backslash G(N)$. Then we have
$$\Phi_N(j(\tau), j(g _i \tau )) = 0$$
and the image is an algebraic curve, isomorphic to $Y_0'(N)$, which lies inside $\overline{Y_0(N)}$.

At this point, we note the following:
\begin{lem}\label{lem_locus_irreducible}
Given $g_1,...,g_n \in G$, the image of $\H$ under the map
$$f(z)=(j(g_1z),...,j(g_nz))$$
is an geometrically irreducible algebraic curve defined over $\Q(j(S))$ (where $S$ is the set of special points).
\end{lem}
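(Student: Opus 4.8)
The plan is to realise the image as a modular curve associated to a congruence subgroup, and then to fix its field of definition by a descent argument powered by one observation: special points map to tuples of singular moduli, and there are enough of them to determine the whole curve.

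First I would use the facts recalled above: the tuple $\bar g=(g_1,\dots,g_n)$ determines the congruence subgroup $\Ga_{\bar g}=g_1^{-1}\Ga g_1\cap\dots\cap g_n^{-1}\Ga g_n$ of $\Ga=\sl2(\Z)$, whose quotient $V_{\bar g}:=\Ga_{\bar g}\backslash\H$ is an algebraic curve. Each coordinate $z\mapsto j(g_iz)$ is $\Ga_{\bar g}$-invariant: if $\ga\in\Ga_{\bar g}$ then $g_i\ga=\ga'g_i$ for some $\ga'\in\Ga$, so $j(g_i\ga z)=j(g_iz)$; hence $f$ factors through an algebraic morphism $V_{\bar g}\to\C^n$ and its image is a $1$-dimensional constructible set, Zariski-dense in an algebraic curve $\overline{C}$. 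Since $\H$, hence $V_{\bar g}$, is connected, $\overline{C}$ is irreducible over $\C$, i.e. geometrically irreducible; and since $\overline{C}$ is an irreducible component of the $\Q$-variety cut out by the modular polynomials relating the coordinates (for $g_i^{-1}g_j\in G_N$ one has $\Phi_N(j(g_iz),j(g_jz))=0$), it is a priori defined over $\overline{\Q}$. This gives the first assertion; it remains to show that $\overline{C}$, and the image (which differs from it by a finite set), has a model over $\Q(j(S))$.

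For this it suffices to show $\overline{C}$ is stable under $\Aut(\C/\Q(j(S)))$, since $\Q(j(S))$, being an algebraic extension of $\Q$, is exactly the fixed field of this group in $\C$, and then Galois descent for closed subschemes of affine space applies. The key point is that $\overline{C}$ is the Zariski closure of $\{f(s):s\in S\}$. Indeed, if $s\in S$ is fixed by $g\in G$, then $g_is$ is fixed by the conjugate $g_igg_i^{-1}\in G$ and is therefore again special, so every coordinate of $f(s)=(j(g_1s),\dots,j(g_ns))$ is a singular modulus and $f(s)\in\Q(j(S))^n$. On the other hand, the special points are dense in $\H$ for the complex topology, so by continuity of $f$ the complex-analytic closure of $\{f(s):s\in S\}$ contains the image of $\H$, which is Zariski-dense in $\overline{C}$; since Zariski-closed sets are analytically closed, $\{f(s):s\in S\}$ is already Zariski-dense in $\overline{C}$. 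Finally any $\sigma\in\Aut(\C/\Q(j(S)))$ fixes each $f(s)$ pointwise and acts as a homeomorphism of $\C^n$ for the Zariski topology, hence fixes $\overline{C}=\overline{\{f(s):s\in S\}}$ setwise, which is the required stability.

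The part I expect to be the real work is not any single estimate but the interface between the analytic and algebraic pictures: checking that $f$ is genuinely an algebraic morphism out of the algebraic curve $V_{\bar g}$ (so that the image is constructible and $\overline{C}$ a bona fide algebraic curve), and that $\overline{C}$ is cut out over a number field, so that descent from $\C$ down to the infinite algebraic extension $\Q(j(S))$ is legitimate. These are routine given the modular-curve background of this section, and the only arithmetic input — that the $j$-invariant of a special point is a singular modulus, hence algebraic — is classical.
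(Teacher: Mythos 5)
Your proof is correct, and it amounts to an unpacking of the paper's one-line argument, which simply asserts that $f(\H)$ is a strongly special subvariety of $\C^n$ and is therefore geometrically irreducible and contains infinitely many special points. The two ingredients you establish by hand are exactly what that citation encodes: irreducibility because $f$ factors through the connected modular curve $\Gamma_{\bar{g}}\backslash\H$ (this is precisely what makes the image ``special''), and control of the field of definition via special points on the curve, using your observation that $g_i s$ is fixed by $g_i g g_i^{-1}$ and hence is special whenever $s$ is. Where you go beyond the paper is in actually carrying out the descent: the paper stops at ``contains infinitely many special points'' and leaves implicit that $f(S)\subset \Q(j(S))^n$, that these points are Zariski dense in the curve, and that stability of a closed subvariety of $\A^n_{\C}$ under $\Aut(\C/\Q(j(S)))$ yields a model over $\Q(j(S))$ (legitimate here since we are in characteristic zero and $\C$ has infinite transcendence degree over $\Q(j(S))$). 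Your route buys self-containedness, avoiding any appeal to the formalism of special subvarieties of $Y(1)^n$; two small simplifications are available: (i) you do not need the complex-analytic density of $S$ in $\H$, since any infinite subset of an irreducible curve is already Zariski dense and $f(S)$ is clearly infinite; and (ii) the image $f(\H)$ is in fact Zariski closed, because $f$ extends to the compactified modular curve with every cusp landing at infinity in each coordinate, so the hedge about the image differing from $\overline{C}$ by a finite set can be dropped.
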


\begin{proof}
The image $f(\H)$ is a strongly special subvariety of $\C^n$ and is therefore geometrically irreducible and contains infinitely many special points.
\end{proof}

By the above discussion, we see that the image of the map
$$\tau \mapsto (j(\tau) , j(g_1 \tau),...,j(g_{\psi(N)} \tau))$$
for $g_i \in \Gamma \backslash G(N)$, is a $\Q(\eta_N)$-model of $Y'_0(N)$, which we denote by $Y'_0(N)_{Q(\eta_N)}$. $Y'_0(N)_{Q(\eta_N)}$ and its $\Gal(\Q(\eta_N) / \Q)$-conjugate are the irreducible components of $\overline{Y_0(N)}_{\Q}$, however in this note we will only be interested in the component $Y'_0(N)_{Q(\eta_N)}$ coming from the map $\tau \mapsto (j(\tau) , j(g_1 \tau),...,j(g_{\psi(N)} \tau))$, since it is these maps, and these curves which control the type of $\tau$ with respect to the model-theory later on. For this reason, we will be interested in the pro-\'etale cover
$$\widetilde{\C}:= \varprojlim_N Y_0'(N)$$
which exists in the category of schemes, and the projections are just the natural rational maps (defined over $\Q(\eta_N)$) induced by inclusions of function fields. \newline

Also taking into account the above discussion, we have the following
\begin{pro}
 $$\Gal(\Q(j , \{j\circ g\}_{g \in G_N} ) / \Q(j)) \cong \pgl2(\Z / N \Z)$$
and
$$\Gal(\Q^{cyc}(j ,  \{j\circ g\}_{g \in G_N} ) / \Q^{cycl}(j)) \cong \psl2(\Z / N \Z).$$
\end{pro}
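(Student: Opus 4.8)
The proposition computes two Galois groups associated to adjoining to $\Q(j)$ (resp. $\Q^{cyc}(j)$) the full collection of functions $j\circ g$ for $g$ running over $G_N$. I need to verify: the normal closure is right, the cyclotomic constants are accounted for correctly, and the group is exactly $\psl_2(\Z/N\Z)$ or $\pgl_2(\Z/N\Z)$ and not something bigger or smaller. Let me think about what's actually going on.

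The field $\Q(j, \{j\circ g\}_{g\in G_N})$ is the function field of the curve $V_{\bar g}$ attached to the tuple consisting of all coset representatives $g_i$ for $\Gamma\backslash G_N$, together with $g_1 = 1$. As noted in the excerpt, $\Gamma_{\bar g} = G(N)$, so this is the function field of $Y_0'(N)$ — or more precisely its $\Q$-model $\overline{Y_0(N)}_\Q$, which has two geometric components. The deck group of $Y_0'(N)/Y_0'(1)$ over $\C$ is $\psl_2(\Z/N\Z)$ by the first Proposition, so over $\C$, the extension $\C(j, \{j\circ g\}) / \C(j)$ is Galois with group $\psl_2(\Z/N\Z)$... wait, but that's the *geometric* Galois group, and the field $\C(j, \{j\circ g\})$ might not be the full geometric object if $\overline{Y_0(N)}_\Q$ is reducible over $\Q$. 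I need to be careful: over $\Q$, the extension is $\Q(j, \{j\circ g\})/\Q(j)$ where the top field corresponds to the *whole* reducible curve (both components). Hence the geometric picture: $\overline{\C(j,\{j\circ g\})}$ is $\C(\overline{Y_0(N)}) = \C(Y_0'(N)) \times \C(Y_0'(N))$ as a product of two fields (two components), and the $\Q$-structure $\Q(j,\{j\circ g\})$ is a field (since $j,\{j\circ g\}$ are honest functions) whose constant field is the quadratic field $\Q(\eta_N)$. So over $\Q(j)$ the extension has degree $2\cdot|\psl_2(\Z/N\Z)| = |\pgl_2(\Z/N\Z)|$ (using $[\pgl_2:\psl_2]=2$ for... no wait, that index is not always 2). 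Hmm, let me reconsider.

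**The actual mechanism.**

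Over $\Q(j)$, the full modular curve with level structure adjoining $j\circ N$ and its normal closure — the right statement is that $\Gal(\Q(Y(N))/\Q(j)) \cong \GL_2(\Z/N\Z)/\{\pm 1\}$ via the action on $N$-torsion, by a theorem essentially due to the interplay of the congruence subgroup picture with the Weil pairing (the determinant is the cyclotomic character, so $\Q(j)$ already "sees" $\mu_N$-twisting). And $\GL_2(\Z/N\Z)/\{\pm1\} = \pgl_2(\Z/N\Z)$. Adjoining only the $j\circ g$ for $g\in G_N$ (not a full level-$N$ structure but a "pair of disjoint cyclic $N$-subgroups up to ordering") cuts this down exactly to the function field of $Y_0'(N)$, and the claim is that this is *still* the full normal closure of $\Q(j,j\circ N)$, with group $\pgl_2(\Z/N\Z)$. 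Over $\Q^{cyc}$, we've adjoined $\mu_N$, which kills the determinant/cyclotomic part, leaving $\SL_2(\Z/N\Z)/\{\pm1\} = \psl_2(\Z/N\Z)$.

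**Steps I would carry out.**

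First, I would establish that $\Q(j,\{j\circ g\}_{g\in G_N})$ is the function field of $\overline{Y_0(N)}_\Q$ and identify its geometric pieces as two copies of $\Q(\eta_N)(Y_0'(N))$ via the first Proposition — this is already set up in the excerpt. Second, I would invoke the standard fact (Shimura, or Diamond–Shurman Ch. 7) that $\Gal(\Q(Y(N))/\Q(j)) \cong \GL_2(\Z/N\Z)/\{\pm1\}$ with the cyclotomic character realized as the determinant, so that $\Gal(\C(Y(N))/\C(j)) \cong \SL_2(\Z/N\Z)/\{\pm1\}$. Third — the crux — I would show that the subextension generated by $\{j\circ g : g\in G_N\}$ corresponds, under this Galois correspondence, to the trivial subgroup (i.e. already equals the normal closure), not to a proper nontrivial subgroup; equivalently, that $Y_0'(N)$ already "resolves" all the level-$N$ data that is definable over $\Q$ from $j$ and $j\circ N$. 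The reason is that $Y_0'(N)$ classifies an *unordered pair of complementary cyclic $N$-subgroups*, which is precisely the Galois-stable datum you get by taking the normal closure of the $Y_0(N)$-cover (one cyclic subgroup) — its stabilizer in $\pgl_2(\Z/N\Z)$ is the group generated by the upper-triangular Borel and the element swapping the two standard lines, and the intersection of all $\pgl_2(\Z/N\Z)$-conjugates of *that* is trivial. I would make this precise by the congruence-subgroup computation $\bigcap_{g} g^{-1}\Gamma_0(N) g = G(N)$ already noted, combined with the two-component Weil-pairing remark, giving $[\Q(j,\{j\circ g\}):\Q(j)] = 2|\psl_2(\Z/N\Z)| = |\pgl_2(\Z/N\Z)|$. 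Fourth, for the cyclotomic version, base-change to $\Q^{cyc}$: the constant field $\Q(\eta_N)\subset\Q^{cyc}$, so the two geometric components become conjugate no longer — rather, $\Q^{cyc}(j,\{j\circ g\})$ is now *geometrically* connected (one component), of degree $|\psl_2(\Z/N\Z)|$ over $\Q^{cyc}(j)$, and the Galois group is exactly $\psl_2(\Z/N\Z)$, matching $\Aut_{\Cov}(Y_0'(N)/Y_0'(1))$ from the first Proposition.

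**Main obstacle.**

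The delicate point is the bookkeeping between $\SL_2$ and $\GL_2$ and the factor coming from the Weil pairing — precisely \emph{why} the field $\Q(j,\{j\circ g\}_{g\in G_N})$ has constant field the quadratic subfield $\Q(\eta_N)$ of $\Q(\mu_N)$ rather than all of $\Q(\mu_N)$ (which is what you'd get from a genuine full level-$N$ structure $Y(N)$). This is exactly the content of the "you need two copies" remark in the proof of the first Proposition: the unordered pair of complementary cyclic subgroups determines the Weil pairing only up to the quadratic ambiguity $\det\mapsto\pm$, hence only the quadratic subfield appears. I would need to phrase this carefully so that the group coming out is $\pgl_2(\Z/N\Z) = \GL_2(\Z/N\Z)/\{\pm1\}$ and not the smaller $\SL_2(\Z/N\Z)/\{\pm1\}\rtimes(\text{quadratic})$ or the larger $\GL_2(\Z/N\Z)/\{\pm1\}$ with $\mu_N$ fully adjoined. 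Everything else is a citation to the standard theory of the modular curves $Y(N)$, $Y_0(N)$ and their fields of definition.
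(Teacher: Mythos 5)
Your overall architecture --- geometric Galois group $\psl2(\Z/N\Z)$ coming from $\Aut_{\Cov}(Y_0'(N)/Y_0'(1))$, an extension of constants accounting for the arithmetic part, and base change to $\Q^{cyc}$ absorbing the constants --- is exactly the route the paper intends (the paper offers no formal proof, only the preceding discussion). But there is a genuine error in your group-theoretic bookkeeping. You assert $\Gal(\Q(Y(N))/\Q(j))\cong \GL_2(\Z/N\Z)/\{\pm I\}$ and then identify this with $\pgl2(\Z/N\Z)$; these groups are \emph{not} equal unless $\phi(N)\le 2$, since the first quotients only by $\pm I$ and the second by the full group of scalars. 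As a result, your ``crux'' step --- showing that $\Q(j,\{j\circ g\}_{g\in G_N})$ corresponds to the \emph{trivial} subgroup of $\Gal(\Q(Y(N))/\Q(j))$, i.e.\ already equals $\Q(Y(N))$ --- is an attempt to prove something false. The correct mechanism is that every scalar matrix stabilises every cyclic subgroup $C\le E[N]$, while (for $j(E)$ transcendental, using the injectivity of $C\mapsto j(E/C)$ from \ref{definable cyclic}) nothing outside the scalars fixes all the functions $j\circ g$; so the subgroup corresponding to $\Q(j,\{j\circ g\}_{g\in G_N})$ is precisely the image of the scalars, and the Galois group is $\GL_2(\Z/N\Z)/Z(\GL_2(\Z/N\Z))=\pgl2(\Z/N\Z)$. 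That quotient-by-all-scalars, not a normal-closure argument, is where the $\pgl2$ comes from.

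A second, related slip is the degree count $2\,|\psl2(\Z/N\Z)|=|\pgl2(\Z/N\Z)|$, which you yourself flagged as suspicious and then reinstated: the index $[\pgl2(\Z/N\Z):\psl2(\Z/N\Z)]=|(\Z/N\Z)^\times/((\Z/N\Z)^\times)^2|$ equals $4$ already for $N=8$ or $N=15$. Correspondingly the field of constants of $\Q(j,\{j\circ g\}_{g\in G_N})$ is the subfield of $\Q(\mu_N)$ fixed by the squares in $(\Z/N\Z)^\times$, of degree $2^k$ with $k\ge 1$, and $\overline{Y_0(N)}_{\Q}$ has $2^k$ geometric components rather than two. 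This defect is inherited from the paper's own ``two copies'' remark and does not affect the truth of the proposition (which asserts only the isomorphism types $\pgl2$ and $\psl2$), but your proof should not rest on a quadratic constant-field count. Once the first point is repaired, the cyclotomic statement falls out as you describe: over $\Q^{cyc}(j)$ the determinant (cyclotomic) obstruction vanishes, the relevant image is $\SL_2(\Z/N\Z)\cdot Z/Z\cong\psl2(\Z/N\Z)$, and this matches the geometric deck group.
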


If $N$ is not prime then there are nontrivial square roots of unity mod $N$. This means that $\pgl2(\Z / N \Z)$ is strictly larger than $\psl2(\Z / N\Z)$ and in this case there is an element $\pgl2(\Z / N \Z)$ which switches between the two connected components of $\overline{Y_0'(N)}_{\Q}$, but as mentioned above, only one connected component is ever seen with respect to the model theory. \newline

Define $$\pi_1':=\varprojlim_N \Aut_{\Fet}(Y_0'(N) / Y_0'(1)) \cong \psl2(\hat{\Z}).$$
It seems appropriate to use this notation because $\pi_1'$ is nearly the \'etale fundamental group of $\P_1(\C) - \{0,1, \infty \}$, and we want to think of it as one (we have taken the limit over a directed system which is a proper subset of the finite \'etale covers of $X_0'(1)$). A fibre in $\widetilde{\C}$ above a point in $\C$ is a $\pi_1'$-torsor.  \newline

For an elliptic curve $E/K$, let $\CycSub(E,N)$ be the set of cyclic subgroups of $E(\bar{K})$ of order $N$. We have the following:
\begin{pro}\label{definable cyclic}
If $K$ is a subfield of $\C$ and $E / K$ is an elliptic curve such that $\End(E)\cong \Z$, then there is a bijection
$$\eta: \{ j(E/C) \ | \ C \in \CycSub(E,N) \}\longleftrightarrow \{C \ | \ C \in \CycSub(E,N) \}$$
which is definable in the language of rings with parameters from $K$.
\end{pro}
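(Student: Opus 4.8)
The plan is to produce $\eta$ as the inverse of the forward assignment $C\mapsto j(E/C)$, so there are two things to check: \emph{(i)} that this assignment is injective --- this is precisely where the hypothesis $\End(E)\cong\Z$ is used --- and \emph{(ii)} that its graph (hence also that of its inverse) is cut out by a first-order formula over $K$ in the language of rings.

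For \emph{(i)}, suppose $C_1,C_2\in\CycSub(E,N)$ with $E/C_1\cong E/C_2$. Write $\phi_i\colon E\to E/C_i$ for the quotient isogenies, so $\deg\phi_i=N$ and $\ker\phi_i=C_i$, and let $\lambda\colon E/C_1\xrightarrow{\ \sim\ }E/C_2$ be an isomorphism. Then $\widehat{\phi_1}\circ\lambda^{-1}\circ\phi_2\in\End(E)$ has degree $N\cdot1\cdot N=N^2$, so, since $\End(E)\cong\Z$, it equals $[m]$ with $m^2=N^2$; as $[N]=\widehat{\phi_2}\circ\phi_2$ this gives $\widehat{\phi_1}\circ\lambda^{-1}\circ\phi_2=\pm\,\widehat{\phi_2}\circ\phi_2$, and cancelling the epimorphism $\phi_2$ yields $\widehat{\phi_1}\circ\lambda^{-1}=\pm\,\widehat{\phi_2}$. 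Dualising, and using $\widehat{\lambda^{-1}}=\lambda$ for an isomorphism of elliptic curves, we get $\lambda\circ\phi_1=\pm\,\phi_2$, whence $C_1=\ker\phi_1=\ker\phi_2=C_2$. (Alternatively, over $\C$ one writes $E=\C/\Lambda$ and $E/C_i=\C/\Lambda_i$ with $\Lambda\subseteq\Lambda_i\subseteq\tfrac1N\Lambda$, and comparing covolumes of the lattices identified by the isomorphism forces the relevant scalar into $\End(E)$ and hence, being a unit of absolute value $1$, equal to $\pm1$.) Thus $C\mapsto j(E/C)$ is a bijection of $\CycSub(E,N)$ onto the set $\{\,j(E/C):C\in\CycSub(E,N)\,\}$, and $\eta$ will be its inverse.

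For \emph{(ii)}, I would first fix a canonical $K$-definable coding of the elements of $\CycSub(E,N)$ --- for instance, code $C$ by the monic polynomial vanishing exactly on the $x$-coordinates of its nonzero points (with the obvious minor adjustment when $N$ is even), or by a generator $P\in E[N]$ of exact order $N$ taken modulo the action of $(\Z/N\Z)^\times$. The torsion subscheme $E[N]$, the exact-order condition, and the relation $\langle P\rangle=\langle Q\rangle$ are all definable over the field of definition of $E$ (hence over $K$) via the division polynomials, so the set of codes is $K$-definable. Given such a code, Vélu's formulas express a Weierstrass model of $E/C$ whose coefficients are explicit rational functions with $\Q$-coefficients of the coefficients of $E$ and the coordinates of the points of $C$; composing with the usual rational expression for the $j$-invariant exhibits ``$y=j(E/C)$'' as a formula of the language of rings with parameters from $K$. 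Since the domain of codes is $K$-definable and $F$ (an algebraically closed field) eliminates imaginaries, the image set $\{\,j(E/C)\,\}$ (equivalently, the root set of $\Phi_N(j(E),Y)$) is $K$-definable; combined with the injectivity from \emph{(i)}, the graph $\{(j(E/C),C)\}$ is a $K$-definable bijection, and swapping the two coordinates in the defining formula gives $\eta$.

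The main obstacle is step \emph{(i)}: the statement really is the rigidity assertion that a non-CM elliptic curve admits no two distinct cyclic $N$-isogenies onto isomorphic targets. This genuinely fails without the endomorphism hypothesis --- for a CM curve, cyclic $N$-isogenies with isomorphic targets correspond to distinct ideal classes --- so $\End(E)\cong\Z$ is doing essential work there. Once \emph{(i)} is established, step \emph{(ii)} is a routine transcription of the division polynomials, Vélu's formulas, and the modular polynomial into first-order conditions over $K$.
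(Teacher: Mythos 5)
Your proof is correct and follows essentially the same decomposition as the paper, whose proof simply cites two auxiliary propositions --- one giving a uniform (definable, V\'elu-type) representation of the quotient isogeny $E\to E/C$, and one asserting that distinct finite subgroups of a non-CM curve yield non-isomorphic quotients --- which are exactly your steps \emph{(ii)} and \emph{(i)}. Your dual-isogeny argument for injectivity and your coding of cyclic subgroups via division polynomials supply complete proofs of precisely those two ingredients.
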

\begin{proof}
Follows from \ref{pro_uniformrep_subgroup_isog} and \ref{pro:finitesubgroupsisomorphiccurves}.
\end{proof}

 By the above and \ref{lem_locus_irreducible}, we end up with the following key model theoretic statement:
\begin{lem}\label{reciprocity}
Let $\cM$ and $\cM'$ be models of $\Th(j)$ as above and let $\tau \in H$, $\tau' \in H'$ such that $j(\tau)=j'(\tau')$. Then there is $\sigma \in \pi_1'$ such that $\sigma \eta (j(g \tau))=j'(g \tau')$ for all $g \in G$ i.e. in the above situation we may associate $\tau$ and $\tau'$ to $\tilde{\tau}$ and $\tilde{\tau}'$ in the pro-\'etale cover $\tc$.
\end{lem}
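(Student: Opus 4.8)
The plan is to use the functoriality of the pro-étale cover $\tc = \varprojlim_N Y_0'(N)$ together with the fact that, for each $N$, the tuple $(j(g\tau))_{g \in \Gamma\backslash G_N}$ and the tuple $(j'(g\tau'))_{g \in \Gamma\backslash G_N}$ are both points on the \emph{same} curve $Y_0'(N)_{\Q(\eta_N)}$ lying over the common point $j(\tau) = j'(\tau')$ of $Y_0'(1)$. First I would fix a compatible system of coset representatives for $\Gamma\backslash G_N$ as $N$ ranges over the positive integers ordered by divisibility, so that the tuples $(j(g\tau))_{g}$ and $(j'(g\tau'))_g$ define genuine points $\tilde\tau_N \in Y_0'(N)$ and $\tilde\tau'_N \in Y_0'(N)$ that are compatible under the transition maps; taking the inverse limit produces points $\tilde\tau, \tilde\tau' \in \tc$ above $j(\tau) = j'(\tau') \in \C$.

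The core step is to show that $\tilde\tau$ and $\tilde\tau'$ lie in the same fibre of $\tc \to \C$, which is a $\pi_1'$-torsor by the discussion preceding the lemma; once that is established, the group $\pi_1' \cong \psl2(\hat\Z)$ acts simply transitively on that fibre, so there is a unique $\sigma \in \pi_1'$ with $\sigma \tilde\tau = \tilde\tau'$, and unwinding the identification of fibre coordinates with the values $j(g\tau)$ (via the definable bijection $\eta$ of Proposition \ref{definable cyclic}, which matches cyclic subgroups of the corresponding elliptic curve with the $j$-invariants of the quotient curves) gives exactly $\sigma\eta(j(g\tau)) = j'(g\tau')$ for all $g \in G$. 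To see that the two points lie in the same fibre, I would argue level by level: for each $N$, Lemma \ref{lem_locus_irreducible} says the image of $\H$ under $z \mapsto (j(g_1 z),\dots,j(g_n z))$ is a geometrically irreducible curve over $\Q(j(S))$, and this same curve is cut out in $\cM$ and in $\cM'$ by the first-order theory $\Th(j)$ (which both models satisfy), so $\tilde\tau_N$ and $\tilde\tau'_N$ realize the same field type over $\Q(j(\tau)) = \Q(j'(\tau'))$ — they are both generic points, over that field, of $Y_0'(N)_{\Q(\eta_N)}$ specialized at $j(\tau)$. Hence they differ by an element of $\Gal$ acting through $\Aut_{\Fet}(Y_0'(N)/Y_0'(1)) \cong \psl2(\Z/N\Z)$, i.e.\ they are in the same $\psl2(\Z/N\Z)$-orbit; these orbits are compatible in $N$ and assemble to a single $\psl2(\hat\Z) = \pi_1'$-orbit.

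The main obstacle I expect is the bookkeeping needed to pass correctly between the three pictures: the analytic coordinates $j(g\tau)$, the scheme-theoretic fibre of $\tc$, and the Galois/deck-transformation action. In particular one must check that the definable bijection $\eta$ is genuinely the same map in both models — this is where one uses that $\eta$ is definable in the language of rings with parameters from the prime field (so it is absolute, transferring across the identification of the two algebraically closed fields) — and that the choice of coset representatives does not introduce a spurious dependence, which is handled by the remark that $\Gamma_{\bar g}$ depends only on the set $\{g_1,\dots,g_n\}$, not the ordering. A secondary subtlety is that $\overline{Y_0(N)}_{\Q}$ may be disconnected (when $N$ is not prime, $\pgl2(\Z/N\Z) \supsetneq \psl2(\Z/N\Z)$), but as emphasized in the text only the component $Y_0'(N)_{\Q(\eta_N)}$ arising from the map $\tau \mapsto (j(\tau), j(g_1\tau),\dots)$ is relevant, and both $\tilde\tau_N$ and $\tilde\tau'_N$ live on that component by construction, so the extra automorphisms that switch components never enter.
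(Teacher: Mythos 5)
Your argument is correct and is essentially the proof the paper intends: the lemma is stated there without a written proof, as a direct consequence of the preceding discussion of $Y_0'(N)_{\Q(\eta_N)}$ and Lemma~\ref{lem_locus_irreducible}, and your level-by-level construction of compatible points $\tilde\tau_N,\tilde\tau'_N$ on the single irreducible component cut out by $\Th(j)$ in both models, followed by the torsor/inverse-limit step, is exactly that argument made explicit. One small caveat: your intermediate claim that $\tilde\tau_N$ and $\tilde\tau'_N$ ``realize the same field type over $\Q(j(\tau))$'' is only clearly true when $j(\tau)$ is transcendental (for algebraic non-special $j(\tau)$ the Galois orbit in the fibre may be proper), but this claim is not needed --- the geometric irreducibility of $Y_0'(N)$ from Lemma~\ref{lem_locus_irreducible} already gives that the deck group $\psl2(\Z/N\Z)$ acts transitively on the fibre, which is all your conclusion uses.
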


\subsection{The action of Galois on $\tc$}

Let $\tilde{p} :\tc \ra \C$ be the covering map. Given a point $z \in \C$, there is also an action of Galois on $\tilde{p}^{-1} (z)$. We can translate this into an action on the $j$ values corresponding to cyclic subgroups via the following.
\begin{pro}
Let $E$ be a non-CM elliptic curve over $\Q(j(E))$ and let \linebreak $\sigma \in \Gal(\C / \Q(j(E))$. Then for all $C_1,C_2 \in \CycSub(E,N)$
$$\sigma j(E/C_1)=j(E / C_2) \textrm{ iff } \sigma C_1 = C_2.$$
\end{pro}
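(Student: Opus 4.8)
The plan is to relate the Galois action on $j$-invariants of isogenous curves to the Galois action on the corresponding cyclic subgroups by passing through the moduli interpretation of $Y_0(N)$. First I would recall that for a non-CM elliptic curve $E/\Q(j(E))$, a cyclic subgroup $C \in \CycSub(E,N)$ determines a point on $Y_0(N)$ whose coordinates are $(j(E), j(E/C))$, since the isogeny $E \to E/C$ with kernel $C$ is precisely the datum parametrized by $Y_0(N)$ (here we use $\End(E) \cong \Z$ so that a cyclic $N$-isogeny is determined by its kernel up to the automorphisms $\pm 1$, which act trivially on $j$). The modular polynomial $\Phi_N$ is defined over $\Q$, so $\sigma$ acts on such pairs by $\sigma(j(E), j(E/C)) = (j(E), \sigma j(E/C))$, and because $\sigma$ fixes $j(E)$ this is again a point of $Y_0(N)$ over $j(E)$; hence $\sigma j(E/C) = j(E/C')$ for a \emph{unique} $C' \in \CycSub(E,N)$ (uniqueness of $C'$ given its image in $Y_0(N)$ again using non-CM). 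This already defines a map $C \mapsto C'$ on $\CycSub(E,N)$; it remains to identify $C'$ with $\sigma C$.

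Next I would make the identification $C' = \sigma C$ via the moduli interpretation over $\bar\Q$ (or $\C$). The curve $E/C$ is the target of the quotient isogeny $\phi_C : E \to E/C$; applying $\sigma$ to the whole diagram (which makes sense since everything is defined over $\bar{\Q}$ inside $\C$, and $E$ itself is fixed by $\sigma$ as it is defined over $\Q(j(E))$) gives an isogeny $\sigma\phi_C : E \to \sigma(E/C)$ with kernel $\sigma C$. Thus $\sigma(E/C)$ is $E/\sigma C$ and so $\sigma j(E/C) = j(\sigma(E/C)) = j(E/\sigma C)$. Comparing with the previous paragraph, $j(E/C') = j(E/\sigma C)$, and since $E$ is non-CM the two cyclic subgroups of order $N$ giving the same $j$-invariant of the quotient must coincide (the only ambiguity being composition with $\Aut(E) = \{\pm 1\}$, which fixes every subgroup); therefore $C' = \sigma C$, which is exactly the claimed equivalence $\sigma j(E/C_1) = j(E/C_2) \Leftrightarrow \sigma C_1 = C_2$.

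The main obstacle I anticipate is handling the injectivity/uniqueness statements cleanly — i.e.\ making sure that in the non-CM case the assignment $C \mapsto j(E/C)$ is genuinely injective on $\CycSub(E,N)$ so that "same $j$-invariant of quotient" really forces "same subgroup". This is where $\End(E) \cong \Z$ is essential: for CM curves two non-conjugate cyclic subgroups can give isomorphic quotients, breaking the bijection. I would address this by invoking Proposition~\ref{definable cyclic} (and the results \ref{pro_uniformrep_subgroup_isog}, \ref{pro:finitesubgroupsisomorphiccurves} it rests on), which already packages exactly the bijection $\eta$ between $\{j(E/C) : C \in \CycSub(E,N)\}$ and $\CycSub(E,N)$; then the content of this proposition is simply that the Galois action is compatible with $\eta$, which follows formally from the $\Q$-rationality of $\Phi_N$ together with the $\bar\Q$-rationality of the quotient isogeny construction as above. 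The remaining routine check is that $\sigma$ genuinely commutes with forming quotients of abelian varieties over $\bar\Q$, which is standard functoriality of base change along $\Spec\sigma$.
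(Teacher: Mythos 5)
Your proposal is correct and follows essentially the same route as the paper: the paper's proof simply cites Lang (\emph{Elliptic Functions}, Ch.~2, \S3, Prop.~3) for the compatibility $\sigma(E/C) \cong E/\sigma C$, which you derive directly from functoriality of base change, and cites Proposition~\ref{definable cyclic} for the injectivity of $C \mapsto j(E/C)$ in the non-CM case, exactly as you do. Your expanded treatment of the uniqueness step (reducing the ambiguity to $\Aut(E) = \{\pm 1\}$ acting trivially on subgroups) is a sound unpacking of what the citations carry.
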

\begin{proof}
This follows from \cite[2, \S3 Proposition 3]{lang1987elliptic}, and \ref{definable cyclic}.
\end{proof}

\subsection{The induced `representation'}
 The fibre $\tilde{p}^{-1}(z) \subset \tc$ above the point $z \in \C$ is a $\pi_1'$-torsor. The fibre is also acted on by $\Gal(\C / \Q(z))$, and the actions commute giving us a homomorphism $$\rho: \Gal(\C / \Q(z)) \lora \pi_1'.$$

\subsection{Galois representations on Tate modules}

Let $A$ be an abelian variety of dimension $g$ defined over a field $K$. Then there is a continuous Galois representation
$$\rho: G_K \longrightarrow \Aut(T(A)) \cong \GL_{2g}(\widehat{\Z})$$
on the Adelic Tate module of $A$. \newline

The Adelic Mumford-Tate conjecture states that for any abelian variety $A$ over a number field $K$, the image $\rho(\Gal(\bar{K} / K))$ is open in the $\widehat{\Z}$ points of the Mumford-Tate group $\MT(A)(\widehat{\Z})$. For example, this is a theorem of Serre (\cite{serre1986resume}) if $\End(A ) = \Z$ and $\dim (A)$ is odd or equal to 2 or 6. For the definition of the Mumford-Tate group $\MT(A)$ please see \cite{moonen2004introduction}. The Hodge group of $A$, is defined as
$\Hg(A): = \MT(A) \cap \SL_V$ where $V=H^1(A(\C),\Q)$. $\MT(A)$ is the almost direct product of $\Hg(A)$ and $\G_m$. If the reader doesn't know any Hodge theory then they can just think of the Hodge group as a notational device. In the case of a non-CM elliptic curve $E$, $\Hg(E)(\widehat{\Z})\cong \sl2(\widehat{\Z})$. The Hodge group behaves well with respect to products of elliptic curves (i.e. the Hodge group of the product is the product of the Hodge groups \cite{imai1976hodge}). \newline


As in \cite[3.4]{ribet1975}, we say that a profinite group satisifes the `commutator subgroup condition' if for every open subgroup $U$, the closure of the commutator subgroup $[U:U]$ of $U$, is open in $U$.

\begin{thm}\label{amt}
Let $A$ be an abelian variety defined over $K$, a finitely generated extension of $\Q$ or a finitely generated extension of an algebraically closed field $k$, such that $A$ is a product of $r$ non-isogenous elliptic curves (with $j$-invariants which are transcendental over $k$ in the second case). Then the image of the Galois representation on the Tate module of $A$ is open in $\Hg(A)(\hat{\Z})$.
\end{thm}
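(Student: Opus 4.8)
The plan is to deduce the statement from the classical adelic open-image theorems of Serre (over number fields, and their function-field analogues), together with the fact that the Hodge group is multiplicative on products of elliptic curves. First I would treat the base case $r=1$. If $E/K$ is a non-CM elliptic curve with $K$ finitely generated over $\Q$, Serre's theorem on the adelic representation gives that the image of $\rho\colon G_K \to \GL_2(\hat\Z)$ is open in $\GL_2(\hat\Z)$; intersecting with $\SL_2$ and using that $\Hg(E)(\hat\Z)\cong\sl2(\hat\Z)$ we get openness in $\Hg(E)(\hat\Z)$. For $K$ finitely generated over an algebraically closed field $k$ with $j(E)$ transcendental over $k$, I would instead invoke the function-field version: after a specialization argument (or directly, via the geometric monodromy of the universal elliptic curve over the modular curve, which is all of $\SL_2(\hat\Z)$), the geometric part of the representation already surjects onto $\SL_2(\hat\Z)$, so openness is automatic.

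Next I would pass to the product. Write $A = E_1\times\cdots\times E_r$ with the $E_i$ pairwise non-isogenous and non-CM. By \cite{imai1976hodge} we have $\Hg(A)(\hat\Z) = \prod_i \Hg(E_i)(\hat\Z) \cong \sl2(\hat\Z)^r$, and the Galois image lands in this product, projecting onto an open subgroup of each factor by the base case. The issue is that a subgroup of a product which is open (even surjective) on each factor need not be open in the product — one must rule out unexpected "diagonal" correlations between the $E_i$. This is exactly where the non-isogeny hypothesis enters, and it is handled by the independence criterion of Ribet: if $H_1,\dots,H_r$ are profinite groups each satisfying the commutator subgroup condition, and no two $E_i$ become "linked" after passing to finite-index subgroups, then a closed subgroup of $\prod H_i$ surjecting onto each factor is open. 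Concretely, I would apply Ribet's lemma (the Goursat-type argument in \cite[\S3.4]{ribet1975}) with $H_i = \Hg(E_i)(\hat\Z) \cong \sl2(\hat\Z)$: these are topologically finitely generated and have open commutator subgroups, so the only obstruction to openness of the image in $\prod_i \sl2(\hat\Z)$ is a common nonabelian finite quotient through which two of the projections factor compatibly. Such a quotient would force, via the correspondence between the mod-$N$ representations and isogenies, an isogeny between the relevant $E_i$ (or a CM relation), contradicting the hypothesis.

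I expect the main obstacle to be this independence step, i.e. verifying precisely that non-isogeny (plus non-CM) rules out the compatible common quotients that Ribet's criterion forbids. In the number field case this is close to classical (Serre, Ribet), but I would need to be careful about: (i) the prime-by-prime versus adelic bookkeeping — controlling the correlations at all but finitely many primes via Serre's results and then dealing with the finitely many bad primes and the "mod-$N$ for composite $N$" entanglement by hand; and (ii) carrying the whole argument over a base that is only finitely generated over an algebraically closed $k$, where I would either reduce to the number field / curve case by a spreading-out and specialization argument (choosing a specialization preserving non-isogeny and the Galois image, in the spirit of Serre's and Cadoret--Tamagawa's specialization theorems) or argue geometrically that the geometric monodromy is already the full $\sl2(\hat\Z)^r$ by the non-isogeny of the corresponding points on the modular curve. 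The remaining bookkeeping — the reductions to $\Hg$ from $\MT$, and the passage between $\SL_2$ and $\GL_2$ images — is routine given the structure theory of $\MT(A)$ recalled above.
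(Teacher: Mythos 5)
Your overall architecture matches the paper's: Serre's adelic open image theorem for the base case over number fields, Ribet's commutator-subgroup condition for $\SL_2(\hat{\Z})$ to reduce the product case to $r=2$ (the paper cites \cite[3.4]{ribet1975} for exactly this reduction rather than re-running the Goursat argument as you do), and a spreading-out argument (the paper uses \cite[Remark 6.12]{pink2005combination}) to pass from number fields to fields finitely generated over $\Q$. The genuine divergence is in the final descent to a base finitely generated over an algebraically closed field $k$. You offer two options: a specialization argument in the spirit of Cadoret--Tamagawa, or a geometric monodromy computation. Be careful with both. Specialization naturally transfers information from the generic fibre to special fibres, whereas what is needed here is the reverse: one must show that base-changing from $K_0$ (finitely generated over $\Q$) up to $kK_0$ does not shrink the adelic image to a non-open subgroup. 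And the pure geometric monodromy route over $Y(1)^r$ only applies when the tuple of $j$-invariants is generic; the hypothesis only makes each $j(E_i)$ individually transcendental over $k$, so the tuple may be algebraically dependent. The paper's mechanism for this step is a regular-extension argument: it picks a finite extension $L/K$ with $K^{cyc}(\tor(E_1))\cap K^{cyc}(\tor(E_2))=L^{cyc}$, uses that each $L^{cyc}(\tor(E_i))/L^{cyc}$ is regular and that the two are linearly disjoint (hence free, hence the compositum is regular over $L^{cyc}$ by \cite[VIII,\S4]{lang2002algebra}), and concludes that intersecting with an algebraically closed subfield $F$ costs nothing, so the Galois group over $FL$ is still open. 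Your plan is workable, but this last step is the one you leave vaguest, and it is precisely where the paper does its real work; if you pursue your route you should either prove an honest adelic specialization statement or adopt the regularity/linear-disjointness argument.
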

\begin{proof} For $K$ a number field and $r \leq 2$, this was done by Serre (\cite[\S 6]{serre1971prop}), and Ribet reduced the problem to the case $r=2$ by noting that $\SL_2(\hat{\Z}$) satisfies the commutator subgroup condition, see \cite[3.4]{ribet1975}. So we just need to check the function field case (i.e. the second case in the hypothesis of the theorem). If we have a product $A=E_1 \times \cdots \times E_r$ of $r$ non-isogenous elliptic curves with transcendendal $j$-invariants and $K = \Q(j(E_1),...,j(E_r))$, then the image of the Galois representation
$$\rho : G_K \ra \Hg(A)( \hat{\Z})$$
is open. This follows from the number field case and the `spreading out' argument of \cite[Remark 6.12]{pink2005combination}. Now we lift this result up to over an algebraically closed field by the theory of regular field extensions:\newline

Consider two non isogenous elliptic curves $E_1$, $E_2$ with transcendental $j$-invariants and let $K=\Q(j(E_1),j(E_2))$. Then by the  above the image of
$$\rho: G_{K^{cyc}} \lora \Hg(E_1 \times E_2)(\Z) \cong \SL_2(\hat{\Z} ) \times \SL_2(\hat{\Z})$$
is open, and there is a finite extension $K \subseteq L$ such that
$$K^{cyc}(\tor(E_1)) \cap K^{cyc}(\tor(E_2)) = L^{cyc}.$$
 Since $L^{cyc}(\tor(E_1))$ and $L^{cyc}(\tor(E_1))$ are both Galois extensions of $L^{cyc}$, which intersect in $L^{cyc}$, they are linearly disjoint over $L^{cyc}$ and are therefore free over $L^{cyc}$. Also, the extensions
$$ L^{cyc}(\tor(E_i)) / L^{cyc} $$
are regular (see for example \cite[6,\S3]{lang1987elliptic}), so if $F \subset \C$ is a countable algebraically closed field then
$$L^{cyc}(\tor(E_i)) \cap F = L^{cyc} \cap F.$$ Now since the $L^{cyc}(\tor(E_i))$ are both regular extensions of $L^{cyc}$, and they are free from each other over $L^{cyc}$, the compositum $L^{cyc}(\tor(E_1), \tor(E_2))$ is a regular extension of $L^{cyc}$ ( \cite[VIII,\S4, Corollary 4.14]{lang2002algebra}).Then
$$\Gal(FL(\tor(E_1) , \tor(E_2)) / FL) \cong  \Gal(L^{cyc}( \tor(E_1) , \tor(E_2)) / L^{cyc} ( \tor(E_1) , \tor(E_2)) \cap FL)$$
but since the extension is regular we have
$$L^{cyc} ( \tor(E_1) , \tor(E_2)) \cap FL^{cyc} = L^{cyc} \cap FL $$
and the result follows since we know that
$\Gal(L^{cyc}( \tor(E_1) , \tor(E_2)) / L^{cyc} )$
is an open subgroup of $\SL_2(\hat{\Z}) \times \SL_2(\hat{\Z})$ and $L$ is a finite extension of $K$.
\end{proof}

\begin{cor}\label{amtcor}
Let $E_i,...,E_r$ be pairwise non-isogenous elliptic curves  defined over $K$, a finitely generated extension of $\Q$ or a finitely genrated extension of an algebraically closed field $k$ where in this case the $j$-invariants  are transcendental over $k$. Then
$$[K(\tor(E_i))\cap K(\cup_{j \neq i} \tor(E_j)): K^{cycl}] \textrm{ is finite.}$$
\end{cor}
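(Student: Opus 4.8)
Proof proposal for Corollary \ref{amtcor}.

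The plan is to deduce this directly from Theorem \ref{amt} applied to the single abelian variety $A := E_1 \times \cdots \times E_r$, using that $\Hg(A)$ is the direct product $\prod_k \Hg(E_k)$ (see \cite{imai1976hodge}), together with the fundamental theorem of infinite Galois theory and a Goursat-type bookkeeping. First I would replace $K$ by $K^{cyc}$ on the left-hand side: since the Weil pairing $E_k[N]\times E_k[N]\to\mu_N$ is Galois-equivariant and surjective, $\mu_N\subseteq K(E_k[N])$, so $K^{cyc}\subseteq K(\tor(E_k))$ for every $k$; hence $K(\tor(E_i)) = K^{cyc}(\tor(E_i)) =: L_i$, $K\big(\bigcup_{j\ne i}\tor(E_j)\big) = K^{cyc}\big(\bigcup_{j\ne i}\tor(E_j)\big) =: M_i$, and $L_iM_i = K^{cyc}(\tor(A))$. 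It therefore suffices to prove $[L_i\cap M_i : K^{cyc}] < \infty$.

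Next, let $\Gamma := \rho(G_{K^{cyc}}) \le \Aut(T(A)) = \prod_k \Aut(T(E_k))$ be the image of Galois on the adelic Tate module of $A$, where $\rho$ preserves each $T(E_k)$ and hence lands in the block-diagonal subgroup. Over $K^{cyc}$ the cyclotomic character is trivial, so $\Gamma\le \Hg(A)(\hat{\Z}) = \prod_k\Hg(E_k)(\hat{\Z})$; and since $A$ is a product of $r$ pairwise non-isogenous elliptic curves (with transcendental $j$-invariants in the function-field case), Theorem \ref{amt} gives that $\Gamma$ is open in $\prod_k\Hg(E_k)(\hat{\Z})$. In particular $\Gamma$ contains a subgroup of the form $\prod_k U_k$ with each $U_k$ open in $\Hg(E_k)(\hat{\Z})$. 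Under the Galois correspondence for $K^{cyc}(\tor(A))/K^{cyc}$, the subgroup $\Gamma_i := \Gal(K^{cyc}(\tor(A))/L_i)$ is the kernel of the projection $\Gamma\to\Hg(E_i)(\hat{\Z})$, hence contains every element of $\prod_k U_k$ with trivial $i$-th coordinate; likewise $\Gamma^i := \Gal(K^{cyc}(\tor(A))/M_i)$ is the kernel of $\Gamma\to\prod_{j\ne i}\Hg(E_j)(\hat{\Z})$, hence contains every element of $\prod_k U_k$ with trivial $j$-th coordinates for all $j\ne i$. Multiplying these, $\Gamma_i\Gamma^i \supseteq \prod_k U_k$, so $\Gamma_i\Gamma^i$ is open in $\prod_k\Hg(E_k)(\hat{\Z})$ and therefore of finite index in $\Gamma$.

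Finally I would identify $\Gamma_i\Gamma^i$ with $\Gal(K^{cyc}(\tor(A))/(L_i\cap M_i))$: both $\Gamma_i$ and $\Gamma^i$ are normal in $\Gamma$, being kernels of homomorphisms, so $\Gamma_i\Gamma^i$ is a subgroup; it is closed by compactness; and its fixed field is the intersection of the fixed fields of $\Gamma_i$ and $\Gamma^i$, namely $L_i\cap M_i$. Hence $[L_i\cap M_i : K^{cyc}] = [\Gamma : \Gamma_i\Gamma^i] < \infty$, which is the assertion. I do not expect a serious obstacle here: all the arithmetic content sits in Theorem \ref{amt} (whose proof in turn rests on Ribet's use of the commutator subgroup condition to reduce to $r=2$), and what remains is the routine translation between ``open image in the product Hodge group'' and ``near linear disjointness of $L_i$ and $M_i$ over $K^{cyc}$''. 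The only points needing a little care are the reduction to $K^{cyc}$ via the Weil pairing and checking that $\Gamma_i,\Gamma^i$ are closed and normal so that the Galois correspondence applies cleanly.
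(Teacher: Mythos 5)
Your argument is correct and is exactly the deduction the paper intends (the corollary is in fact stated there without proof): Theorem \ref{amt} gives openness of $\rho(G_{K^{cyc}})$ in $\prod_k\Hg(E_k)(\hat{\Z})$, and the Galois correspondence applied to the closed normal subgroup $\Gamma_i\Gamma^i$ yields $[L_i\cap M_i:K^{cyc}]=[\Gamma:\Gamma_i\Gamma^i]<\infty$. The only caveat, inherited from the paper's own imprecision, is that the containment $\rho(G_{K^{cyc}})\le \Hg(A)(\hat{\Z})$ is only clean when the $E_k$ are non-CM (as in the function-field case, where transcendence of the $j$-invariants forces this), an assumption the paper makes implicitly throughout.
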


\begin{lem} \label{lem abelian}
The extension
$$\Q^{cycl}(j(S)) / \Q^{cycl}$$
is abelian.
\end{lem}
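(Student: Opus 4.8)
The plan is to reduce the statement to the classical theory of complex multiplication. First recall what $S$ is: a point $s\in\H$ is fixed by a non-central element of $\gl2q$ (acting by M\"obius transformations) exactly when $s$ is a root of an integral quadratic form of negative discriminant, i.e. when $s$ is a quadratic irrationality generating an imaginary quadratic field $K$, in which case $s$ corresponds to an elliptic curve with complex multiplication by an order $\mathcal{O}_s\subseteq K$. Hence $j(S)$ is precisely the set of singular moduli. Since $\Q^{cyc}(j(S))$ is the compositum of the fields $\Q^{cyc}(j(s))$ as $s$ ranges over $S$, and since the compositum (even an infinite one) of abelian extensions of a fixed field $F$ is contained in the maximal abelian extension $F^{ab}$ and is therefore itself abelian over $F$, it suffices to prove that $\Q^{cyc}(j(s))/\Q^{cyc}$ is abelian for each single $s\in S$.

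So fix $s\in S$, let $K=\Q(s)$ be the imaginary quadratic field containing it, and let $\mathcal{O}=\mathcal{O}_s$ be the associated order. By the main theorem of complex multiplication (see e.g.\ \cite[Chapter 10]{lang1987elliptic}), $j(s)$ is an algebraic integer and $K(j(s))$ is an \emph{abelian} extension of $K$; in fact it is the ring class field of $\mathcal{O}$, with $\Gal(K(j(s))/K)\cong\Pic(\mathcal{O})$. Now every imaginary quadratic field is contained in a cyclotomic field --- this is a special case of Kronecker--Weber, or simply the observation that $\Q(\sqrt{-d})\subseteq\Q(\zeta_{4d})$ --- so $K\subseteq\Q^{cyc}$, and therefore $\Q^{cyc}(j(s))=\Q^{cyc}\cdot K(j(s))$.

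It remains to descend abelianness along the compositum. Using the standard fact that for an abelian Galois extension $L/K$ and any overfield $M\supseteq K$ (inside a fixed algebraic closure), $LM/M$ is Galois with $\Gal(LM/M)\cong\Gal(L/(L\cap M))$, a subgroup of $\Gal(L/K)$, we take $L=K(j(s))$ and $M=\Q^{cyc}$ and conclude that $\Q^{cyc}(j(s))/\Q^{cyc}$ is Galois with abelian Galois group. Taking the compositum over all $s\in S$ as in the first paragraph then yields that $\Q^{cyc}(j(S))/\Q^{cyc}$ is abelian.

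There is no serious obstacle here: the entire arithmetic content is packaged in the invocation that $K(j(s))/K$ is abelian, which is exactly the main theorem of complex multiplication, and everything else is formal Galois theory (the containment $K\subseteq\Q^{cyc}$, the behaviour of Galois groups under base change, and the fact that an arbitrary compositum of abelian extensions is abelian). The only point requiring a little care is that $\Q^{cyc}(j(S))/\Q^{cyc}$ is genuinely Galois --- but this is immediate since each $\Q^{cyc}(j(s))/\Q^{cyc}$ is Galois, and a compositum of Galois extensions is Galois, with Galois group a closed subgroup of the product of the individual (abelian) Galois groups.
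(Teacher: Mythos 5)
Your proof is correct and follows essentially the same route as the paper's: both arguments rest on the theory of complex multiplication giving that $j(s)$ generates an abelian extension of the associated imaginary quadratic field $K$ (the paper cites the slightly stronger fact that $K(j(\tau))$ together with torsion coordinates is abelian over $K$, you cite the ring class field statement), combined with $K\subseteq\Q^{cyc}$ and the fact that a compositum of abelian extensions is abelian. Your write-up is somewhat more careful about the base-change step $\Gal(LM/M)\cong\Gal(L/(L\cap M))$, which the paper leaves implicit, but there is no substantive difference in approach.
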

\begin{proof}
Let $\tau \in \H$ be such that $E_{j(\tau)}$ has complex multiplication by an imaginary quadratic field $K$. Then $\Q(j(G \tau)$ contains $K$, and is a subset of the field obtained by adjoining $j(\tau)$ and the $x$-coordinates of the torsion of $E_{j(\tau)}$ to $\Q$ (which is an abelian extension of $K$ by the theory of complex multiplication - see for example \cite[p135]{silverman1994advanced}). The result now follows since $\Q^{cycl}$ contains all imaginary quadratic fields and the compositum of abelian extensions is abelian.
\end{proof}

\begin{cor}
Given an abelian variety $A \cong E_1 \times \cdots \times E_n$ defined over $K$ a finitiely generated extension of $\Q$ where $E_i$ are non-isogenous elliptic curves with $\End(E_i)\cong \Z$ we have
$$[\Q^{cycl}(j(S))\cap K(\tor(A)) : K^{cycl} ] \textrm{ is finite.}$$
\end{cor}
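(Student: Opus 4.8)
The plan is to play the abelianity of $\Q^{cycl}(j(S))/\Q^{cycl}$ (Lemma \ref{lem abelian}) off against the largeness of the Galois image on $\tor(A)$ supplied by Theorem \ref{amt}, exploiting that $\SL_2(\hat{\Z})^n$ has essentially no infinite abelian quotients. Set $L:=\Q^{cycl}(j(S))\cap K(\tor(A))$. First I would record two easy consequences of the hypotheses. Since the Weil pairing forces $\mu_N\subseteq K(E_1[N])\subseteq K(\tor(A))$ for all $N$, we have $\Q^{cycl}\subseteq K(\tor(A))$ and hence $K^{cycl}\subseteq K(\tor(A))$; in particular $\Q^{cycl}\subseteq L\cap K^{cycl}$. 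And since $\Q^{cycl}(j(S))/\Q^{cycl}$ is abelian with $\Q^{cycl}\subseteq L$, the subextension $L/\Q^{cycl}$ is abelian (in particular Galois), so the compositum $LK^{cycl}/K^{cycl}$ is abelian, with $\Gal(LK^{cycl}/K^{cycl})\cong\Gal(L/L\cap K^{cycl})$ a subgroup of the abelian group $\Gal(L/\Q^{cycl})$.

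Next I would bring in the arithmetic. Because $E_1,\dots,E_n$ are pairwise non-isogenous with $\End(E_i)\cong\Z$, the Hodge group of $A$ is $\Hg(A)(\hat{\Z})\cong\SL_2(\hat{\Z})^n$ (the Hodge group of a product being the product of the Hodge groups, \cite{imai1976hodge}), and by Theorem \ref{amt} the image of $\Gal(\bar K/K^{cycl})$ on the Tate module of $A$ — that is, $G:=\Gal(K(\tor(A))/K^{cycl})$ regarded inside $\Hg(A)(\hat{\Z})$ — is an open subgroup. Now $LK^{cycl}$ is an abelian extension of $K^{cycl}$ inside $K(\tor(A))$, so it corresponds to a closed subgroup $H\leq G$ with $G/H$ abelian; hence $H\supseteq\overline{[G,G]}$, and $[LK^{cycl}:K^{cycl}]=[G:H]\leq[G:\overline{[G,G]}]$.

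It remains to see that $[G:\overline{[G,G]}]$ is finite, and this is the step I expect to carry the real weight. Here I would invoke the commutator subgroup condition: $\SL_2(\hat{\Z})$ satisfies it, hence so does $\SL_2(\hat{\Z})^n$ by Ribet's reduction \cite[3.4]{ribet1975} — precisely the input already used in the proof of Theorem \ref{amt} — so for the open subgroup $G\leq\SL_2(\hat{\Z})^n$ the closure $\overline{[G,G]}$ is open in $G$; an open subgroup of a profinite group has finite index, whence $[G:\overline{[G,G]}]<\infty$. Combining, $[LK^{cycl}:K^{cycl}]<\infty$, and since $L/(L\cap K^{cycl})$ is Galois this equals $[L:L\cap K^{cycl}]<\infty$, which gives the corollary (noting $\Q^{cycl}\subseteq L\cap K^{cycl}\subseteq K^{cycl}$, so that $L\cap K^{cycl}$ is just $K^{cycl}$ under the reading, implicit in the statement, that $K^{cycl}\subseteq\Q^{cycl}(j(S))$). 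Beyond Theorem \ref{amt} (openness of $G$) and Lemma \ref{lem abelian} (abelianity) no further arithmetic is needed; the one genuine subtlety is the transfer of the commutator subgroup condition from $\SL_2(\hat{\Z})$ to the $n$-fold product, and the rest is bookkeeping.
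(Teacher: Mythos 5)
Your proposal is correct and follows essentially the same route as the paper: Lemma \ref{lem abelian} makes the intersection an abelian extension, Theorem \ref{amt} makes the Galois image an open subgroup $G$ of $\SL_2(\hat{\Z})^n$, and the commutator subgroup condition bounds the abelian quotient $[G:\overline{[G,G]}]$ by a finite number. The paper's proof is just a two-line compression of this argument; you have additionally (and sensibly) patched the small imprecision in the statement by passing to $LK^{cycl}/K^{cycl}$ and noting $[L:L\cap K^{cycl}]=[LK^{cycl}:K^{cycl}]$.
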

\begin{proof}
$(\SL_2 \times \SL_2)(\hat{\Z})$ satisfies the commutator subgroup condition, since the Lie algebra $\fsl \times \fsl$ is its own derived algebra. The result follows since the intersection $K^{cycl}(j(S))\cap K(\tor(A))$ has to be an abelian extension of $K^{cyc}$ by \ref{lem abelian}.
\end{proof}

The following is now immediate:
\begin{cor}\label{amtapp}
Given an abelian variety $A$ as above, and an elliptic curve $E / K$ with $\End(E) \cong \Z$, such that $E$ is non-isogenous to all of the $E_i$ in the product $A$. Let $L:=K^{cycl}(\tor(A))$, then the image of the representation
$$\rho : G_L \lora \Aut(T(E))$$
is open in $\SL(\hat{\Z})$.
\end{cor}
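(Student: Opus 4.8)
The plan is to deduce everything from Theorem~\ref{amt} and Corollary~\ref{amtcor} by absorbing $E$ into the product. First I would set $B := A \times E = E_1 \times \cdots \times E_n \times E$, which by hypothesis is a product of $n+1$ pairwise non-isogenous elliptic curves, each with trivial endomorphism ring, defined over the finitely generated field $K$. Since the Hodge group of a product of pairwise non-isogenous (non-CM) elliptic curves is the product of the Hodge groups, $\Hg(B)(\hat{\Z}) \cong \SL_2(\hat{\Z})^{n+1}$. Theorem~\ref{amt} then gives that the image of $G_{K^{cycl}}$ acting on $T(B) = T(A) \times T(E)$ is open in $\SL_2(\hat{\Z})^{n+1}$ (we work over $K^{cycl}$ so that the determinant characters are killed and the image genuinely lands in the Hodge group). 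Projecting onto the last coordinate, which is a continuous open surjection, the image $W_E$ of $G_{K^{cycl}}$ on $T(E)$ is open in $\SL_2(\hat{\Z})$; in field-theoretic terms $W_E = \Gal(K^{cycl}(\tor(E))/K^{cycl})$.

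Next I would identify the image of $G_L$ on $T(E)$. By the Weil pairing $\mu_N \subseteq K(E_i[N])$ for each $i$, so $\mu_\infty \subseteq K(\tor(A))$ and hence $L = K^{cycl}(\tor(A)) = K(\tor(A))$; similarly $K^{cycl}(\tor(E)) = K(\tor(E))$. The image $\rho(G_L) \subseteq \Aut(T(E))$ is precisely $\Gal(L(\tor(E))/L) = \Gal(K^{cycl}(\tor(A),\tor(E))/K^{cycl}(\tor(A)))$, and restriction to $K^{cycl}(\tor(E))$ identifies this group, by the standard fact that $\Gal(M_1 M_2/M_2) \cong \Gal(M_1/M_1 \cap M_2)$ for Galois extensions $M_1, M_2$ of a common base, with the subgroup $\Gal\bigl(K^{cycl}(\tor(E))\,/\,K^{cycl}(\tor(E)) \cap K^{cycl}(\tor(A))\bigr)$ of $W_E$, whose index in $W_E$ is $[\,K^{cycl}(\tor(E)) \cap K^{cycl}(\tor(A)) : K^{cycl}\,]$.

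Finally I would invoke Corollary~\ref{amtcor} applied to the $n+1$ curves $E_1,\dots,E_n,E$ over $K$, singling out $E$: it gives that $K(\tor(E)) \cap K(\tor(A))$ is a finite extension of $K^{cycl}$. Combined with the identifications $K^{cycl}(\tor(E)) = K(\tor(E))$ and $K^{cycl}(\tor(A)) = K(\tor(A))$ above, the index computed in the previous paragraph is finite. Hence $\rho(G_L)$ is a finite-index subgroup of $W_E$, which is open in $\SL_2(\hat{\Z})$, so $\rho(G_L)$ is open in $\SL_2(\hat{\Z})$, as required.

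I expect there to be essentially no new difficulty here — the statement is flagged as immediate — so the only points requiring care are (i) checking that Theorem~\ref{amt} and Corollary~\ref{amtcor} genuinely apply to the enlarged product $B$, which uses exactly the hypotheses that $E$ is non-isogenous to every $E_i$ and has $\End(E) \cong \Z$, and (ii) the bookkeeping translating finiteness of the field intersection into openness of the Galois image. If one preferred not to route Theorem~\ref{amt} through $K^{cycl}$, the openness of the $G_{K^{cycl}}$-image on $T(B)$ could instead be recovered from the $G_K$-image using that $\SL_2(\hat{\Z})^{n+1}$ satisfies the commutator subgroup condition (its Lie algebra being perfect) together with the fact that $\Gal(K^{cycl} \cap K(\tor(B))/K)$ is abelian.
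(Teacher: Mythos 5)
Your proof is correct and is exactly the derivation the paper intends when it calls the corollary ``immediate'': enlarge the product to $A\times E$, apply Theorem \ref{amt} over $K^{cyc}$ to get openness of the image on $T(E)$, and use Corollary \ref{amtcor} to see that passing from $K^{cyc}$ to $L=K^{cyc}(\tor(A))$ cuts that image down by only a finite index (the image being closed, finite index gives openness). The bookkeeping via the Weil pairing and $\Gal(M_1M_2/M_2)\cong\Gal(M_1/M_1\cap M_2)$ is all sound.
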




\section{Description of the types}

If $\langle H,F \rangle \models \Th(j)$, then we let $\tp_{H}(\tau)$ stand for the type of $\tau$ in the group action sort only, and $\tp_F(z)$ be the type of $z$ in the field sort only.

\begin{pro}\label{fielddet}
For $\bar{\tau} \subset H - S$, $\qftp(\bar{\tau})$ is determined by the field type of its Hecke orbit.
\end{pro}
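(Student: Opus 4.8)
The plan is to identify $\qftp(\bar\tau)$ with the isomorphism type of the substructure it generates and then to check that the only genuinely nontrivial piece of that substructure --- the orbit--coincidence data on the group-action sort --- is already encoded in the field type of the Hecke orbit. Writing $\bar\tau=(\tau_1,\dots,\tau_n)$, the generated substructure is $\langle\bar\tau\rangle=\langle\,G\bar\tau,\ \Z[\,j(G\bar\tau)\,],\ j\,\rangle$: the unary symbols enumerate $G$, so applying them to $\bar\tau$ yields the $G$-set $G\bar\tau$; applying $j$ yields the Hecke orbit $j(G\bar\tau)$, which generates the subring $\Z[\,j(G\bar\tau)\,]$ of the field sort, and $j$ restricts to a surjection between the two. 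Thus for a second model and $\bar\tau'\subset H'-S'$, $\qftp(\bar\tau)=\qftp(\bar\tau')$ says exactly that there is an isomorphism $(\varphi_H,\varphi_F)$ of these substructures with $\varphi_H\tau_i=\tau_i'$. First I would observe that, given equality of the field types of the Hecke orbits (viewed as families indexed by $G\times\{1,\dots,n\}$), the assignment $j(g\tau_i)\mapsto j(g\tau_i')$ on generators respects precisely the polynomial relations the quantifier-free field type records, so it extends to a ring isomorphism $\varphi_F$; and since commutation $\varphi_F\circ j=j'\circ\varphi_H$ is forced on us, the only candidate for $\varphi_H$ is $g\tau_i\mapsto g\tau_i'$. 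So the whole statement collapses to showing this candidate is well defined, i.e. that the coincidence pattern of the $G$-set is visible in the field type:
\[
g\tau_i=g'\tau_{i'}\ \text{in}\ H\quad\Longleftrightarrow\quad g\tau_i'=g'\tau_{i'}'\ \text{in}\ H'\qquad(g,g'\in G,\ i,i'\le n).
\]

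To prove the displayed equivalence, put $h=g'^{-1}g$, so that it becomes $h\tau_i=\tau_{i'}\Leftrightarrow h\tau_i'=\tau_{i'}'$; by the symmetry between the models it suffices to do the forward implication. Assuming $h\tau_i=\tau_{i'}$, I would apply each $k\in G$ and then $j$ to obtain $j(kh\tau_i)=j(k\tau_{i'})$ for all $k$; each of these equations identifies two named coordinates of the Hecke orbit and so belongs to its field type, whence $j'(kh\tau_i')=j'(k\tau_{i'}')$ for all $k$. Taking $k=1$ and using that the $j'$-fibre through the non-special point $\tau_i'$ is a single $\psl2(\Z)$-orbit --- this is StandardFibres, equivalently the $\pi_1'$-torsor structure of $\tc$ from Lemma~\ref{reciprocity} --- gives $\tau_{i'}'=\gamma h\tau_i'$ for some $\gamma\in\SL_2(\Z)$. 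Substituting back, $j'(k\gamma h\tau_i')=j'(kh\tau_i')$ for every $k$, so $k\gamma h\tau_i'$ and $kh\tau_i'$ lie in one $\psl2(\Z)$-orbit; since $kh\tau_i'$ is non-special (hence has trivial stabiliser) this forces $k\gamma k^{-1}\in\psl2(\Z)$ for every $k\in G$. Finally, conjugating a nontrivial element of $\psl2(\Z)$ by a suitable diagonal matrix of $G$ moves it out of $\psl2(\Z)$, so $\gamma$ must act trivially on $\H$, i.e. $\tau_{i'}'=h\tau_i'$.

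Once the displayed equivalence is in hand the rest is bookkeeping: $\varphi_H\colon G\bar\tau\to G\bar\tau'$ is then a well-defined $G$-equivariant bijection, $\varphi_F\circ j=j'\circ\varphi_H$ holds on the generating Hecke orbit by construction, so $(\varphi_H,\varphi_F)$ is an isomorphism of the two substructures taking $\tau_i$ to $\tau_i'$, and $\qftp(\bar\tau)=\qftp(\bar\tau')$. The main obstacle is exactly the equivalence above --- extracting the group-action part of the type from the field type --- and it is there that the two hypotheses are indispensable: non-specialness is what makes all stabilisers trivial, and the rigidity of the $j$-fibres (StandardFibres / Lemma~\ref{reciprocity}) is what forces the covering-space combinatorics recorded in $\tc$ to be faithfully mirrored on the group-action side, preventing a pathological model from giving $\tau_{i'}'$ an orbit different from that of $\tau_i'$ while handing it the same Hecke orbit of $j$-values.
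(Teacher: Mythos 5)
Your proof is correct and rests on the same basic idea as the paper's: for a non-special tuple the only quantifier-free information beyond the field type of the indexed family $j(G\bar\tau)$ is the coincidence pattern $g\tau_i=g'\tau_{i'}$ in the $H$-sort, and this pattern is mirrored down in the field. The difference is one of substance, not just detail. The paper dispatches the mirroring in a single displayed biconditional ($\exists g\, g\tau=\tau'$ iff $\exists N\,\Phi_N(j(\tau),j(\tau'))=0$), which only records \emph{whether} two coordinates are $G$-related; you prove the genuinely delicate direction, namely that the field data recovers the \emph{specific} element $h$ with $h\tau_i=\tau_{i'}$. Your chain --- transfer $j'(kh\tau_i')=j'(k\tau_{i'}')$ for all $k$, invoke StandardFibres to write $\tau_{i'}'=\gamma h\tau_i'$ with $\gamma\in\SL_2(\Z)$, use triviality of stabilisers of non-special points to force $k\gamma k^{-1}\in\psl2(\Z)$ for every $k$, and then conjugate by diagonal matrices to conclude $\gamma$ is trivial in $G$ --- is sound, and the conjugation step is exactly the rigidity the paper's one-liner glosses over. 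Your argument also surfaces something the paper leaves implicit: StandardFibres (or Lemma \ref{reciprocity}'s torsor picture) is not optional here. For a bare model of the first-order axioms one can duplicate the non-special fibres (preserving unique fixed points of special matrices and the surjectivity axioms $\Psi$) to produce $\tau,\tau^*$ with literally identical Hecke-orbit field types but $\tau^*\notin G\tau$, so the hard direction of the proposition, and the right-to-left direction of the paper's own biconditional, fail without SF. This matters because the paper applies \ref{fielddet} both in Corollary \ref{realise} (models of $\T_{\omega_1,\omega}(j)$, where your proof applies verbatim) and in Proposition \ref{QE} ($\omega$-saturated models of $T$ without SF, where only the easy direction is needed since the tuples are taken $G$-independent); your proof correctly isolates where the extra hypothesis is consumed.
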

\begin{proof}
If $\bar{\tau} \cap S=\emptyset$ then all that can be said about $\bar{\tau}$ with quantifier free formulae in the $H$ sort is whether or not any of the coordinates of $\bar{\tau}$ are are related by elements of $G$. If so, then this is expressible down in the field sort by `modular polynomials' i.e.
$$\exists g \in G  \ g \tau = \tau' \ {\rm iff } \ \exists N \in \N \ \ \Phi_N(j(\tau) , j(\tau ' ))=0$$
(see for example  \cite[5,\S3]{lang1987elliptic}).

\end{proof}

The type of $\tau$ is determined by $$\bigcup_{(g_1,...,g_n) \subset G} \Loc((j(g_1 \tau),...,j(g_n \tau)) / \dcl(\emptyset) \cap F)$$ where here we are taking the union over all tuples $(g_1,...,g_n) \subset G$. Note that $\Q(j(S)) \subset \dcl(\emptyset)$, so we are always at least working over the special points. In fact, by quantifier elimation $\dcl(\emptyset) = \Q(j(S))$, but we do not know this at this point. As mentioned in \ref{secgalcl}, it is actually the set $\{g_1,...,g_n \}$ which determines an algebraic curve, and not the tuple, so we see that for a tuple $\bar{\tau} \subset H$, $\qftp(\bar{\tau})$ is determined by
$$\bigcup_{ \{ g_1,...,g_n \} \subset G} \Loc((j(g_1  \bar{\tau}),...,j(g_n \bar{\tau})) / \dcl(\emptyset) \cap F).$$


\section{Realising types}

As mentioned in the introduction, the following result will allow us to realise the field type of a finite subset of a Hecke orbit over any set. Note that the statement of \ref{lem_locus_irreducible} appears in $\Th(j)$.


 \begin{cor}\label{realise}
Given two models $\cM$ and $\cM'$ of $\T_{\omega_1 , \omega}(j)$,  non-special $\tau \in H$, $L \subseteq F$ and any finite subset $\langle g_1,...,g_n \rangle \subset G$ of $\qftp(\tau / L) $, we may find $\tau' \in H'$ realising $\qftp(j(g_1 \tau),...,j(g_n \tau) / L)$.
\end{cor}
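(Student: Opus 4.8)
The plan is to reduce the statement to the irreducibility result of Lemma~\ref{lem_locus_irreducible} together with the fact that $\cM'$ is an $\aleph_0$-saturated (or at least sufficiently rich) model of $\Th(j)$. First I would observe that the quantifier-free type $\qftp(\tau/L)$, restricted to the finite set of parameters $\{g_1,\dots,g_n\}\subset G$, asserts precisely that $(j(g_1\tau),\dots,j(g_n\tau))$ lies on the geometrically irreducible algebraic curve $V_{\{g_1,\dots,g_n\}}$ defined over $\Q(j(S))$ which is the image of $\H$ under $z\mapsto(j(g_1z),\dots,j(g_nz))$, and moreover that it is a \emph{generic} point of this curve over $L$ --- i.e. that it does not lie on any proper $L$-subvariety. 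This is because, $\tau$ being non-special, the only algebraic relations forced among the $j(g_iz)$ are the ones defining the curve itself (and the curve is one-dimensional, so the only proper subvarieties are finite sets of points, which the generic $\tau$ avoids). Since $V_{\{g_1,\dots,g_n\}}$ is defined over $\dcl(\emptyset)\cap F\supseteq\Q(j(S))$, its defining equations are available in the language over $\emptyset$, and the relation $\Phi_N(j(\tau),j(g_i\tau))=0$ (or the appropriate modular-polynomial relations from \ref{fielddet}) pins down the curve.

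Next I would transport this to $\cM'$: the curve $V_{\{g_1,\dots,g_n\}}$, being defined over $\Q(j(S))$ and picked out by formulas in $\Th(j)$, has the `same' incarnation $V'$ over $\Q(j'(S'))\subset F'$, since $\Th(j)$ is complete and we have identified $\dcl$-closures of $\emptyset$. Now in $\cM'$ I want a point $w'=(w'_1,\dots,w'_n)\in V'(F')$ which is generic over $L$ (identifying $L\subseteq F$ with its image in $F'$); such a point exists because $F'$ is algebraically closed of large transcendence degree --- $\trdeg(F')\geq\aleph_0$ guarantees $F'$ realises the generic type of the curve over the countable parameter set $L$ --- and one can take $w'_1$ transcendental over $L$ and then solve for the remaining coordinates along $V'$. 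Finally I must produce $\tau'\in H'$ with $j'(g_i\tau')=w'_i$: the axiom \ref{lem_locus_irreducible} in $\Th(j)$, stating that the image of the relevant map is exactly the curve, together with surjectivity of $j'$ onto $F'$, forces a preimage $\tau'$ with $j'(g_i\tau')=w'_i$ for all $i$; since $w'$ is generic on $V'$ over $L$ and the only quantifier-free information about $(j(g_1\tau),\dots,j(g_n\tau))$ over $L$ is membership in the curve plus genericity, $\tau'$ realises $\qftp(j(g_1\tau),\dots,j(g_n\tau)/L)$.

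The main obstacle I anticipate is the passage from ``$w'$ is a generic point of $V'$ over $L$'' to ``$\tau'$ actually exists in the $H'$-sort with $j'(g_i\tau')=w'_i$ for all $i$ simultaneously.'' One has to know that the curve $V'$ really is the image of the map $z\mapsto(j'(g_1z),\dots,j'(g_nz))$ in $\cM'$ --- this is where the fact that the statement of \ref{lem_locus_irreducible} is part of $\Th(j)$ does the work, but one must be careful that the \emph{same} tuple $(g_1,\dots,g_n)$ (or rather the same coset data) is being used, and that the compatibility of the various $j'(g_iz)$ as one varies $z$ is automatic rather than something needing a saturation argument. A secondary point to check is that non-speciality of $\tau$ genuinely implies genericity on the curve over $L$: one uses that $\tau\notin S$ means no $g\in G$ fixes $\tau$, hence the orbit map has fibres that are single $\sl2(\Z)$-cosets rather than anything smaller, and combined with the irreducibility and one-dimensionality of $V$ this rules out $\tau$ landing on the finitely many ``extra'' special points of the curve.
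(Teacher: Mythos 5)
Your overall route is the paper's: reduce to Lemma \ref{lem_locus_irreducible}, note that its statement (in particular surjectivity of $z\mapsto (j(g_1z),\dots,j(g_nz))$ onto the curve) is part of the complete theory $\Th(j)$ and hence holds in $\cM'$, and pull back a suitable point of the curve. However, there is a genuine gap in your first step: you assert that, because $\tau$ is non-special, the tuple $(j(g_1\tau),\dots,j(g_n\tau))$ is a \emph{generic} point of the curve $V_{\{g_1,\dots,g_n\}}$ over $L$. This is false for general $L\subseteq F$: the corollary places no restriction on $L$, which may for instance already contain $j(\tau)$ itself (and the parameter sets arising in the application in Proposition \ref{QE} are of exactly this unrestricted kind), in which case each $j(g_i\tau)$ is algebraic over $L$ via the modular polynomials and the locus over $L$ is a single point of the curve rather than the whole curve. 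Non-speciality of $\tau$ only controls relations over $\Q(j(S))$, not over an arbitrary parameter set. Consequently your construction --- choose $w'$ generic on $V'$ over $L$ --- realises the wrong type whenever the original tuple is not generic over $L$.

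The paper's proof avoids deciding between the two cases: by \ref{fielddet} the quantifier-free type is determined by $\Loc((j(g_1\tau),\dots,j(g_n\tau))/L\cup\dcl(\emptyset)\cap F)$, which is \emph{some} subvariety of the curve $C=f(\H)$ (possibly a point, possibly all of $C$), and since $f'$ maps onto all of $C$ in $\cM'$ it maps onto this subvariety, whichever it is. You should also drop the appeal to $\trdeg(F')\geq\aleph_0$ and to countability of $L$: neither is a hypothesis of the corollary, and full realisation of the type in the generic case is in any event handled downstream by compactness and $\aleph_0$-saturation in Proposition \ref{QE}, where only realisability of finite fragments (avoidance of finitely many proper subvarieties of an irreducible curve, which surjectivity onto the curve already provides) is needed. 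Your closing worry about the compatibility of the $j'(g_iz)$ as $z$ varies is correctly resolved as you suggest: the single sentence $\Psi_{(g_1,\dots,g_n)}$ of $\Th(j)$ asserts surjectivity of the whole tuple map at once.
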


\begin{proof}
For non-special $\tau$, by \ref{fielddet} $\qftp(j(g_1 \tau),...,j(g_n \tau))$ is determined by $$ \Loc(( j(g_1 \tau),...,j(g_n \tau) ) / L \cup \dcl(\emptyset) \cap F)$$ (in the theory $ACF_0$,  $\qftp(\bar{a} / K)$ is determined by $\Loc(\bar{a} / K)$). We know that \linebreak
  $\Loc(( j(g_1 \tau),...,j(g_n \tau)) / L \cup \dcl(\emptyset) \cap F)$ is a subvariety of the algebraic curve $C:=f(\H)$ given by the lemma. This curve $C$ is defined by over $\Q(j(S)) \subset \dcl(\emptyset)$ and therefore the statement of the lemma (and in particular that the function $f'(z):=(j'(g'_1z),...,j'(g'_nz))$ is onto the curve $C$) appears in $\Th(j)$. Since $f'$ maps onto $C$, it maps onto the subvariety $\Loc(( j(g_1 \tau),...,j(g_n \tau)) / L \cup \dcl(\emptyset) \cap F)$.
\end{proof}

\section{Axiomatisation}

For a structure $\cM$ we let $\Th(\cM)$ denote its first order theory.

If $\langle g_1,...,g_n \rangle \subset G$, then let $\Psi_{( g_1,...,g_n )}$ be a first order sentence expressing the statement of
$$\left( \forall x \  ( j(g_1 x),...,j(g_n x) ) \in V_{(g_1,...,g_n)}  \right)  \bigwedge \forall \bar{v} \in V_{(g_1,...,g_n)} \left( \ \exists x  ( j(g_1 x),...,j(g_n x) ) = \bar{v} \right)$$
 for some algebraic curve $V_{(g_1,...,g_n)}$ (provided by \ref{lem_locus_irreducible}).  \newline 

Let $R \subseteq G^3$ be the ternary relation corresponding to multiplication in $G$ i.e. 
$$R(g_1,g_2,g_3) \textrm{ iff } g_1.g_2 = g_3.$$

Now let $T_H$ be a first order axiom scheme which includes
\begin{itemize}
\item An axiom scheme stating that the action of $G$ on $H$ is faithful;
\item For all $(g_1,g_2,g_3) \in R$, the sentence
$$\forall x , g_1.g_2 x =g_3 x;$$
\item For each $g \in G$ fixing a point of $\H$, an axiom stating that $g$ has a unique fixpoint in $H$.
\end{itemize}
The first two axioms ensure that (up to isomorphism) we have an action of the correct group, since having a faithful action means that
$$\forall x ,(g_1.g_2)( x) = g_3 (x)  \textrm{ iff }   g_1.g_2 = g_3.$$
 It is easy to see that the theory $T_H$ is a strongly minimal theory, and therefore is complete has a unique model in each uncountable cardinality. The model-theoretic algebraic closure operator gives a trivial pregeometry and we therefore have a notion of independence on the sort $H$. A tuple $( \tau_1,..., \tau_r) \subset H$ is said to be $G$-\textit{independent} if it is independent with respect to this trivial pregeometry i.e. if for all $i$, $\tau_i$ is non-special and for all $i \neq j$ $\tau_i \notin G \tau_j$.  \newline

We then let $T$ be the following theory:
$$T_H \bigcup \Th(\langle \C , + , \cdot , 0 , 1 \rangle) \bigcup_{( g_1,...,g_n) \subset G} \Psi_{( g_1,...,g_n )} \bigcup_{s \in S} \tp(s)$$
where here $\tp(s)$ is the complete type of the special point $s$ in the standard model $\C_j$.

\section{Quantifier elimination and completeness}

\begin{pro}\label{QE}
Let
$$\cM = \langle \cH, \cF , j \rangle \textrm{ and } \cM' = \langle \cH', \cF , j' \rangle$$
be $\omega$-saturated models of $T$ and
$$\sigma : H\cup F \ra H' \cup F$$
a partial isomorphism with finitely generated domain $D$. Then given any $z \in H \cup F$, $\sigma$ extends to the substructure
generated by $D \cup \{z \}$.
\end{pro}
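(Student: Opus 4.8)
The plan is to prove quantifier elimination via a back-and-forth argument, splitting into cases according to whether the new element $z$ lies in the field sort $F$ or the group-action sort $H$, and in each case reducing the extension problem to results already established in the excerpt. Throughout I work over $\dcl(\emptyset)$, which contains $\Q(j(S))$ since the complete types $\tp(s)$ of all special points are among the axioms of $T$; write $D_H = D \cap H$ and $D_F = D \cap F$, and let $\bar\tau$ enumerate a $G$-independent tuple whose $G$-orbits generate $D_H$ (using strong minimality of $T_H$ and the definition of $G$-independence from the axiomatisation section).

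\medskip

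\emph{Case 1: $z \in F$.} Since $\langle \C,+,\cdot,0,1\rangle \subseteq T$ is the theory $\mathrm{ACF}_0$, which admits quantifier elimination, it suffices to extend $\sigma$ to the field-theoretic substructure generated by $D_F \cup \{z\}$ over the subfield $F_0 \subseteq F$ generated by $D_F$ together with all coordinates $j(g\tau_i)$ for $g \in G$, $\tau_i$ in $\bar\tau$ (this is the field sort's worth of information attached to $D$). If $z$ is algebraic over $F_0$ we match it with a conjugate realising the same minimal polynomial over $\sigma(F_0)$, available by $\omega$-saturation of $\cM'$; if $z$ is transcendental over $F_0$ we send it to any element of $F$ transcendental over $\sigma(F_0)$, which exists because $\trdeg(F) \geq \aleph_0$ is part of the theory. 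One must check that adjoining $z$ to the field sort does not force any new relation in the $H$ sort: by Proposition~\ref{fielddet} and the description of types in Section~3, all quantifier-free information about $\bar\tau$ already lives in the loci $\Loc((j(g_1\bar\tau),\dots,j(g_n\bar\tau))/\dcl(\emptyset)\cap F)$, so no $H$-relation is created.

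\medskip

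\emph{Case 2: $z \in H$.} If $z$ is special, or if $z \in G\tau_i$ for some $\tau_i$, then $z$ is already in the substructure generated by $D$ (in the first case by the axiom $\tp(s)$; in the second by faithfulness of the action and the relation $R$), so there is nothing to do. Otherwise $z$ is non-special and $G$-independent from $\bar\tau$. Enumerate $G = \{h_1, h_2, \dots\}$. I build the image $z'$ as a limit of a chain of finite partial data: at stage $n$, using Corollary~\ref{realise} applied with $\tau = z$, the finite set $\{h_1,\dots,h_n\}$, and parameter set $L = \sigma(F_0)$, I find (in $\cM'$, using $\omega$-saturation to actually realise the type) an element whose tuple $(j'(h_1 z'),\dots,j'(h_n z'))$ has the same locus over $\sigma(F_0) \cup \dcl(\emptyset)\cap F$ as $(j(h_1 z),\dots,j(h_n z))$ over $F_0 \cup \dcl(\emptyset)\cap F$. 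To pass to the limit coherently — i.e. to get a single $z' \in H'$ working for all $n$ simultaneously — I use Lemma~\ref{reciprocity}: having matched $j(z)$ with $j'(z')$, there is $\sigma_0 \in \pi_1'$ with $\sigma_0\eta(j(gz)) = j'(gz')$ for all $g \in G$, which pins down the entire Hecke orbit consistently. The remaining point is that this $z'$ genuinely realises the full quantifier-free type of $z$ over $D$: this is exactly where the arithmetic input Corollary~\ref{amtapp} enters — the adelic open-image result guarantees that once $j'(z')$ is chosen so that finitely much torsion data matches, the Galois action on the full Tate module of $E_{j'(z')}$ over $\sigma(F_0)(\tor(A_{j(\bar\tau)}))$ is as large as possible, so there are no unexpected algebraic relations between the Hecke orbit of $z'$ and $D \cup \{$orbit of $\bar\tau\}$ beyond those forced by the finitely many already arranged.

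\medskip

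The main obstacle is the coherence in Case 2: individually realising each finite locus via Corollary~\ref{realise} is routine, but assembling these into one element $z'$ whose entire infinite Hecke orbit matches — and verifying that no relation to the previously built tuple $\bar\tau$ is accidentally forced — requires genuinely combining the geometric irreducibility statement (Lemma~\ref{lem_locus_irreducible}), the reciprocity/torsor statement (Lemma~\ref{reciprocity}), and the open-image theorem (Corollary~\ref{amtapp}) in the right order. Everything else is bookkeeping with $\mathrm{ACF}_0$ quantifier elimination and $\omega$-saturation.
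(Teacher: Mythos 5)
Your reduction is the right one --- split on the sort of $z$, dispose of the special and $G$-dependent cases, and use \ref{fielddet} together with \ref{realise} to reduce to realising the field type of the Hecke orbit of $z$ over the parameters --- and your Case 1 is fine (the paper instead folds $z\in F$ into the $H$-case via surjectivity of $j$, which is slightly cleaner, and note that $\trdeg(F)\geq\aleph_0$ is \emph{not} part of $T$; you get infinite transcendence degree from $\omega$-saturation instead). The problem is your assembly step in Case 2. Having shown via \ref{realise} that every finite fragment of the union $\bigcup_{(g_1,\dots,g_n)}\qftp_F\bigl((j(g_1z),\dots,j(g_nz))/L\bigr)$ is realisable, the correct (and intended) conclusion is immediate: the union is a finitely satisfiable, hence consistent, type over a finitely generated parameter set, and $\omega$-saturation of $\cM'$ realises it by a single $z'\in H'$. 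That is the entire reason the proposition assumes $\omega$-saturated models. You instead use saturation only at each finite stage (where it is not needed --- realising one locus is a single formula guaranteed by the axioms $\Psi_{(g_1,\dots,g_n)}$) and then try to manufacture coherence from Lemma \ref{reciprocity} and Corollary \ref{amtapp}.

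That substitute does not work as written. Lemma \ref{reciprocity} only says that once $j(z)=j'(z')$, the two Hecke orbits are related by \emph{some} $\sigma_0\in\pi_1'$; different points $z'$ in the fibre over $j(z)$ correspond to different $\sigma_0$ and realise genuinely different types over $L$ (this is exactly the phenomenon driving the ``Necessary conditions'' section), so \ref{reciprocity} does not select a $z'$ whose orbit has the right algebraic relations to $C$ and $j(G\bar h)$. And \ref{amtapp} --- the open-image input asserting that finitely much torsion data determines the rest --- is the tool the paper reserves for Lemma \ref{homo}, where saturation is unavailable and the parameter sets are carefully constrained; invoking it here requires the whole dictionary between Galois action on cyclic subgroups and loci of Hecke orbits that you have not set up, and imports the hardest arithmetic in the paper into a step that is a one-line compactness argument. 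Replace your limit construction by: the type is finitely satisfiable by \ref{realise}, hence consistent, hence realised in the $\omega$-saturated $\cM'$.
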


\begin{proof}
If $( h_1,...,h_n) =\bar{h} \in H^n$ generates $D\cap H$, then we may assume that $(\bar{h}, z)$ is $G$-independent. Suppose that $z \in H-D$ and let $C$ be a finite subset of $F$ such that $C \cup \{h_1,...,h_n \}$ generates $D$. Let 
$$L:=\left( \dcl^{\cM}(\emptyset)\cap  F \right) (C,j(G\bar{h}))\cong \left( \dcl^{\cM'}(\emptyset)\cap F' \right) (C^{\sigma},j(G \bar{h}^{\sigma})).$$
By \ref{fielddet}, $\qftp(z/ D))$ is determined by the union of all $\qftp_F((j(g_1 x), ..., j(g_n x))/L)$ over all finite tuples $(g_1,...,g_n) \subset G$, and by Corollary \ref{realise}, every finite subset of this type is realisable in any model of $T$. Therefore, by compactness the type is consistent, and since $\cM'$ is $\aleph_0$-saturated, it has a realisation $z' \in H'$. The case where $z \in F$ is covered by the above by \ref{fielddet} and since $j$ is surjective (we may assume that $z$ is non-special).
\end{proof}

\begin{cor}
$T$ is complete, has quantifier elimination and is superstable.
\end{cor}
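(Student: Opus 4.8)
The plan is to derive all three properties from Proposition~\ref{QE} together with standard model-theoretic machinery. First I would establish quantifier elimination: Proposition~\ref{QE} says that in $\omega$-saturated models of $T$, any partial isomorphism with finitely generated domain extends one element further. A routine back-and-forth between two $\omega$-saturated models, iterated countably (or transfinitely, building an increasing chain of finite partial isomorphisms) then shows that any two $\omega$-saturated models are partially isomorphic over any common finitely generated substructure; in particular they realise the same quantifier-free types. Since the language has the ternary multiplication relation $R$ built in and $T$ contains the sentences $\Psi_{(g_1,\dots,g_n)}$, finitely generated substructures are controlled by quantifier-free data, so this back-and-forth shows that any formula is equivalent modulo $T$ to a quantifier-free one — i.e.\ $T$ has quantifier elimination. (One should note here that $\dcl(\emptyset)$ is pinned down by the axioms $\bigcup_{s\in S}\tp(s)$, so the two models agree on the empty-definable closure and the back-and-forth has a place to start.)

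Completeness is then immediate: by the axioms $\bigcup_{s\in S}\tp(s)$ every model of $T$ has the same $\dcl(\emptyset)$ (the standard one, $\Q(j(S))$ sitting inside an algebraically closed field with the correct special points), so the empty substructure embeds the same way into every model; combined with quantifier elimination this forces all models to satisfy the same sentences. Alternatively, one invokes the \L{}o\'s--Vaught test: $T$ has no finite models, and Theorem~\ref{thmcat} (or its proof) gives $\aleph_1$-categoricity, which yields completeness.

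For superstability I would count types. By quantifier elimination, a complete type over a set $A$ (which we may replace by the substructure it generates, and then by its algebraic closure in the field sort) is determined by its quantifier-free part. On the $H$-sort the theory $T_H$ is strongly minimal, hence $\omega$-stable there, so the $H$-part contributes only finitely many or countably many types over a countable set. On the $F$-sort the theory is that of an algebraically closed field, which is $\omega$-stable. The content of the description-of-types section (Proposition~\ref{fielddet} and the discussion following it) is precisely that the quantifier-free type of a tuple $\bar\tau$ from $H$ is determined by the field types of the loci $\Loc((j(g_1\bar\tau),\dots,j(g_n\bar\tau))/\dcl(\emptyset)\cap F)$ as $(g_1,\dots,g_n)$ ranges over the \emph{countably many} finite tuples from $G$. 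Hence the number of complete types over a set $A$ of size $\kappa$ is bounded by $(\kappa+\aleph_0)$-many choices of field-locus data, i.e.\ by $\kappa + 2^{\aleph_0}$ for $\kappa\ge\aleph_0$; actually, since $ACF_0$ is $\omega$-stable one gets $|S(A)|\le |A|+\aleph_0$, which is stability in every infinite cardinal and in particular superstability.

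The main obstacle — though it has already been absorbed into the results cited above — is the reduction of the full quantifier-free type of a point $\tau\in H$ to countably much field data: a priori the Hecke orbit $j(G\tau)$ is infinite and its field type could carry uncountably much information, but Proposition~\ref{fielddet} together with the fact (Lemma~\ref{lem_locus_irreducible}) that each finite sub-orbit lands on a single curve defined over $\dcl(\emptyset)$ caps this. Once that is in hand, superstability is a straightforward type-counting argument and the remaining work is the bookkeeping of the back-and-forth for quantifier elimination, which is entirely routine given Proposition~\ref{QE}.
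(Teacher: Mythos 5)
Your derivations of quantifier elimination and completeness from Proposition \ref{QE} are correct and are exactly what the paper (which states this corollary without proof) intends: Proposition \ref{QE} is the standard one-step back-and-forth criterion for QE between $\omega$-saturated models, and completeness then follows because the axioms $\bigcup_{s\in S}\tp(s)$ together with $\Th(\langle\C,+,\cdot,0,1\rangle)$ decide all quantifier-free sentences. (Your ``alternatively, Vaught's test'' aside does not work, though: Theorem \ref{thmcat} is about the $\cL_{\omega_1,\omega}$-theory $\T_{\omega_1,\omega}(j)+\trdeg(F)\geq\aleph_0$, not about the first-order theory $T$, and $T$ itself is not uncountably categorical --- models of $T$ can have non-standard fibres of $j$. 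It would also be circular, since the categoricity proof uses this corollary.)

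The superstability argument, however, contains a genuine error. You claim that since $ACF_0$ is $\omega$-stable one gets $|S(A)|\le|A|+\aleph_0$, i.e.\ stability in every infinite cardinal. This is false: $T$ is \emph{not} $\omega$-stable. Over a countable algebraically closed subfield $A$ of $F$ containing a non-special algebraic value $z=j(\tau)$, the quantifier-free type of $\tau$ over $A$ is determined by the actual tuple $(j(g\tau))_{g\in G}$ (each coordinate lies in $\acl(A)=A$, so each locus is a single point), and by the axioms $\Psi_{(g_1,\dots,g_n)}$ together with $\omega$-saturation every compatible system --- that is, every point of the fibre $\tilde{p}^{-1}(z)$ of the pro-\'etale cover, a $\psl2(\hat{\Z})$-torsor of cardinality $2^{\aleph_0}$ --- is consistent. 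So $|S_1(A)|=2^{\aleph_0}$ for countable $A$. The slip is in passing from ``countably many finite tuples $(g_1,\dots,g_n)$, each contributing at most $|A|+\aleph_0$ choices of locus'' to ``$|A|+\aleph_0$ types in total'': a type is a \emph{compatible sequence} of such choices, and the inverse limit can have size $(|A|+\aleph_0)^{\aleph_0}$. The correct count splits into two cases. If $j(\tau)\notin\acl(A)$ then for every tuple $\bar{g}$ the locus $\Loc(j(g_1\tau),\dots,j(g_n\tau)/A)$ is an $A$-irreducible curve inside the geometrically irreducible curve $V_{\bar{g}}$, which is defined over $\Q(j(S))\subseteq\dcl(\emptyset)$, hence equals $V_{\bar{g}}$; so the whole tower is forced and such $\tau$ contribute essentially one type. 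If $j(\tau)\in\acl(A)$ the tower is a point of a profinite set of size at most $2^{\aleph_0}$. This gives $|S_1(A)|\le|A|+2^{\aleph_0}$, hence $\kappa$-stability for all $\kappa\ge 2^{\aleph_0}$, which is superstability --- but not $\omega$-stability, and not stability in every infinite cardinal. (Equivalently: every type does not fork over a finite subset of its domain, because once the transcendence data of $j(\bar{\tau})$ is fixed the remaining profinite information is a chain of algebraic extensions.) Your conclusion is right, but the counting argument as written proves something false and needs this case division to be repaired.
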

 
\begin{rem}
By quantifier elimination we now know that $\dcl(\emptyset)=S \cup \Q(j(S))$ in any model of $\Th(j)$.
\end{rem}

\subsection{The $\omega$-saturated model $\M$}

We may view $\tc$ as a model of $\Th(j)$ (which we denote by $\M$) as follows:

$$\langle \langle H' ; G \rangle \ra^{\hat{j}} \langle \C ;+ \cdot,0,1 \rangle \rangle$$

As a set, $H'$ is defined to be $S\cup T$ where $T$ are the non-special fibres in $\tc$ (which has a natural action of $G$) and $\hat{j}$ is the projection down to $\psl2(\Z) \backslash \H$ composed with the standard $j$-function.

\begin{pro}
$\M$ is an $\omega$-saturated model of $\Th(j)$.
\end{pro}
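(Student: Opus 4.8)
The plan is to verify the two defining properties separately: that $\M$ is a model of $\Th(j)$, and that it is $\omega$-saturated. For the first part, since $\M$ is constructed directly from the pro-\'etale tower $\tc=\varprojlim_N Y_0'(N)$ sitting over $\C$, one checks that the interpretation of each symbol agrees with the standard model on the nose. The field sort is literally $\C$, so $\Th(\langle\C,+,\cdot,0,1\rangle)$ holds; the group $G$ acts on the non-special fibres $T$ because $G$ acts compatibly on the tower (the action being the one coming from $\gl2q^{ad}$ on $\H$ passing to each $Y_0'(N)$), and the faithfulness and fixpoint axioms in $T_H$ are inherited; the sentences $\Psi_{(g_1,\dots,g_n)}$ hold because $\hat j$ is surjective onto the relevant curves $V_{(g_1,\dots,g_n)}$ by construction of the tower and Lemma \ref{lem_locus_irreducible}; and the special point types $\tp(s)$ are copied from $\C_j$ by fiat. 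Hence $\M\models T$, and since $T$ is complete, $\M\models\Th(j)$.

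For $\omega$-saturation I would argue type by type over a finite parameter set $A\subset H'\cup\C$. By quantifier elimination for $T$ (the corollary after \ref{QE}), it suffices to realise quantifier-free types. A quantifier-free type over $A$ splits, via \ref{fielddet} and the description of types in Section 3, into the data of which elements are $G$-related (expressible by modular polynomials downstairs) together with, for the relevant new element $z$, a field type $\Loc$ built from the loci of finite pieces $(j(g_1\bar\tau),\dots,j(g_n\bar\tau))$ of Hecke orbits. Since each such locus is a subvariety of the geometrically irreducible curve $f(\H)$ of \ref{lem_locus_irreducible}, and since in $\M$ the map $\hat j$ is genuinely surjective onto every curve $Y_0'(N)$ and hence onto each $V_{(g_1,\dots,g_n)}$, any consistent finite approximation is realised in $\M$; this is exactly the content of Corollary \ref{realise} applied to $\cM'=\M$. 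For a new field element $z\in\C$ one uses surjectivity of $\hat j$ to reduce to the $H$-sort case (as in the last line of \ref{QE}). The point is that $\C$, being uncountable and algebraically closed, is $\omega$-saturated in the field language, and the pro-\'etale fibres supply enough points upstairs: every finitely generated partial type in the $H$-sort, being determined by finitely much torsion data on the corresponding abelian variety, is realised because the cover $\tc$ realises all the relevant Galois-orbit configurations.

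Assembling this: given a finite $A$ and a $1$-type $p$ over $A$ consistent with $\Th(j)$, its restriction to each finite fragment of a Hecke orbit is realised in $\M$ by Corollary \ref{realise}, so by compactness the full type is finitely satisfiable, and then a single witness in $\M$ is produced either because the needed locus is cut out inside an honest-to-goodness curve over which $\hat j$ surjects (the $H$-sort case) or by surjectivity of $\hat j$ together with $\omega$-saturation of $\C$ (the $F$-sort case). Since $A$ was an arbitrary finite set, $\M$ is $\omega$-saturated.

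The main obstacle I expect is bookkeeping the compatibility of the $G$-action across the whole tower $\varprojlim_N Y_0'(N)$ and checking that the fixpoint and faithfulness axioms really do transfer — in particular that a non-special fibre of $\tc$ carries a free $G$-action matching the abstract action axiomatised in $T_H$, which is where the identification of $\pi_1'$ with $\psl2(\hat\Z)$ and Lemma \ref{reciprocity} are doing the real work. The saturation argument itself is then essentially a repackaging of the proof of \ref{QE} with $\cM'$ specialised to $\M$, so once the ``$\M\models\Th(j)$'' part is nailed down the rest is routine.
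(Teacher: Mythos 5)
Your first half (checking $\M\models T$ axiom by axiom and invoking completeness of $T$) is fine and in fact more explicit than the paper, which simply asserts that $\M$ satisfies the $\Psi_{(g_1,\dots,g_n)}$. For the saturation half you follow the same route as the paper — reduce to non-special elements, use \ref{fielddet} to see that a type is determined by the loci of finite pieces of the Hecke orbit, and use surjectivity of $\hat j$ onto each $V_{(g_1,\dots,g_n)}$ to realise each finite piece. But your final step, ``by compactness the full type is finitely satisfiable, and then a single witness in $\M$ is produced because the needed locus is cut out inside an honest-to-goodness curve over which $\hat j$ surjects,'' is a genuine gap: finite satisfiability of a type in a structure does not produce a realisation in that structure (that is the very definition of saturation, not a consequence of compactness), and the ``needed locus'' for a complete type is not a subvariety of a single curve but an infinite compatible family of loci, one in each $V_{(g_1,\dots,g_n)}$ as $(g_1,\dots,g_n)$ ranges over all finite tuples. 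The decisive evidence that something is missing is that your argument applies verbatim to the standard model $\C_j$: it too has field sort $\C$ and satisfies every $\Psi_{(g_1,\dots,g_n)}$, so it too realises every finite fragment via Corollary \ref{realise}; yet $\C_j$ is not in general $\omega$-saturated — indeed Section 7 of the paper shows that whether $\C_j$ realises all the types realised in $\M$ is equivalent to a Mumford--Tate-type statement. An argument that proves $\C_j$ is $\omega$-saturated proves too much.

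What actually makes $\M$ (and not $\C_j$) saturated is the inverse-limit structure of $\tc=\varprojlim_N Y_0'(N)$: the fibre of $\tilde p$ over a non-special point is a $\pi_1'\cong\psl2(\hat\Z)$-torsor, hence a nonempty \emph{compact} (profinite) set, and the conditions imposed by the type at each finite level cut out a decreasing chain of nonempty closed subsets of this fibre (nonempty precisely because each finite fragment is realised, by the $\Psi$ axioms and Lemma \ref{lem_locus_irreducible}). Compactness of the fibre — not compactness of first-order logic — then yields a single point of $\tc$ lying in all of them, i.e.\ a realisation of the complete type. You gesture at this when you say ``the pro-\'etale fibres supply enough points upstairs,'' but the claim that a type is ``determined by finitely much torsion data'' is exactly what is false for a complete type over parameters (it is an adelic, not finite-level, condition), and the passage from all finite levels to the limit is the one step you must make explicit. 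The paper's own proof is admittedly terse on this point as well, but your write-up replaces the correct mechanism (profinite fibres) with an incorrect one (compactness plus finite satisfiability), so as it stands the proof does not go through.
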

\begin{proof}
If $x$ is special then its type is fixed so we only need worry about non-special elements. Consider the type of $x$ non-special, realised in the top sort of some model. We have seen that $\tp(x)$ is determined by $\Loc(j(G x))$ and this is determined by finite chunks of it. So  consider a finite tuple $(g_1,...,g_n) \subset G$. Then $\Loc(j(g_1 x),...,j(g_n x))$ is a subvariety of $V_{(g_1,...,g_n)}$ and so since $\M$ satifies the axiom $\Psi_{( g_1,...,g_n )}$, $\hat{j}$ maps onto $V_{(g_1,...,g_n)}$ and therefore onto $\Loc(j(g_1 x),...,j(g_n x))$.



\end{proof}

\begin{rem}
Now by basic model theory, given two countable models $\cM$ and $\cM'$ of $\Th(j) + SF$, we can embed them both in $\M$ (since $\M$ is $\omega$-saturated and you can embed a model of cardinality $\omega$ into an $\omega$-saturated model). In particular, given a tuples $\bar{x} \in H$ and $\bar{x}' \in H'$ we can embed $\cl_{\cM}(\bar{x})$ and $\cl_{\cM'}(\bar{x}')$ into $\M$ (where the colsure operator here is the one from section \ref{secsuf}). 
\end{rem}

\section{Necessary conditions}

In this section we show that if $\Th(j)+SF$ is $\aleph_1$-categorical, then the adelic Mumford-Tate conjecture for elliptic curves must hold. The argument goes as follows:
First we show that types of independent tuples (see definition below) in $\omega$-saturated models of $\Th(j)$, and in models of $\Th(j)+SF$ are the same, and that there are either finitely or uncountably many such types over a point in $\C^n$. Finally we use a result of Keisler saying that if we have $\aleph_1$-categoricity, then there can be only countably many such types, and then we translate this model theoretic statement into arithmetic geometry.

\begin{dfn}
By a \textit{strongly $G$-independent} tuple $( \tau_1,..., \tau_r )  \subset H$ we mean that $\tau_i$ is non-special and $j(\tau_i)$ is not in the Hecke orbit of $j(\tau_j)$ for all $i$ and $j$. These conditions can be summarised by stating that $\Phi_N(j(\tau_i),j(\tau_j)) \neq 0$ for all $N$. Note that in models of $\Th(j) + SF$, being independent is the same as being strongly independent.
\end{dfn}

\begin{pro}
Given an $\omega$-saturated model $\langle \cH,\cF \rangle$ of $\Th(j)$ and a strongly $G$-independent tuple $\bar{\tau} \subset H$, there is a model of $\Th(j)+SF$ realising $\tp(\bar{\tau})$. 
\end{pro}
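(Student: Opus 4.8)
The plan is to build a model of $\Th(j)+\mathrm{SF}$ realising $\tp(\bar\tau)$ by exhibiting it inside a suitably chosen substructure of the $\omega$-saturated model $\M$ constructed from $\tc$. Concretely, starting from the strongly $G$-independent tuple $\bar\tau=(\tau_1,\dots,\tau_r)$ in the $\omega$-saturated model $\langle\cH,\cF\rangle\models\Th(j)$, I would first extract the field data: the point $\bar z=(j(\tau_1),\dots,j(\tau_r))\in\C^r$ (after identifying the field sort with $\C$, using categoricity of $\mathrm{ACF}_0$ in the relevant cardinality, or just working with an isomorphic copy) together with, for each finite tuple $\bar g=(g_1,\dots,g_n)\subset G$, the locus $\Loc(j(g_1\bar\tau),\dots,j(g_n\bar\tau))$ over $\Q(j(S))$. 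By Proposition \ref{fielddet} and the discussion following it, this collection of loci is exactly $\qftp(\bar\tau)$.

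Next I would lift each $\tau_i$ to a point $\tilde\tau_i$ in the pro-\'etale cover $\tc$ sitting above $j(\tau_i)$. Here is where Lemma \ref{reciprocity} does the work: since $\tc$ (viewed as the model $\M$) is a model of $\Th(j)$ realising the same field-side information, for each $i$ there is $\sigma_i\in\pi_1'$ conjugating the Hecke-orbit $j$-values of $\tau_i$ to those of a chosen lift $\tilde\tau_i$. One then takes the substructure $\cN$ of $\M$ generated by $\tilde\tau_1,\dots,\tilde\tau_r$ over the special points, i.e. $\cN=\cl_\M(\tilde\tau_1,\dots,\tilde\tau_r)$ in the closure operator of the quasiminimal pregeometry. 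I claim $\cN\models\Th(j)$ (it is a substructure of the $\omega$-saturated model $\M$, closed under the operations, with field part a subfield of $\C$ algebraically closed by construction of the closure) and, crucially, $\cN\models\mathrm{SF}$: the fibres of $\hat j$ in $\tc$ are exactly $\psl2(\Z)$-orbits by construction of $\M$ as $S\cup T$ with $T$ the non-special fibres of $\tc$, so the StandardFibres axiom holds in $\M$ and hence in any substructure. Finally, $\tp^{\cN}(\tilde\tau_1,\dots,\tilde\tau_r)=\tp^{\langle\cH,\cF\rangle}(\bar\tau)$: by Proposition \ref{fielddet} the quantifier-free type is determined by the loci of finite chunks of the Hecke orbit, and strong $G$-independence ensures no modular-polynomial relations are imposed on either side, so the loci agree; then by quantifier elimination (the Corollary to \ref{QE}) the full types agree.

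The main obstacle, and the step requiring most care, is verifying that the substructure $\cN$ really is a \emph{model} of $\Th(j)$ rather than merely a substructure with the right quantifier-free type of $\bar\tau$ — in particular that its field part is algebraically closed and that it contains all the special points with their correct types, and that $\hat j$ restricted to $\cN$ is still surjective onto the field part. This is precisely controlled by the quasiminimal closure operator: $\cl_\M$ of a set already contains $S\cup\Q(j(S))=\dcl(\emptyset)$, is closed under $G$ and $\hat j$, and has algebraically closed field part because the pregeometry restricted to the field sort refines algebraic closure in $\mathrm{ACF}_0$. One should also double-check the subtlety flagged in Lemma \ref{reciprocity} about $\pi_1'=\psl2(\hat\Z)$ versus $\pgl2(\hat\Z)$: because only one connected component of $\overline{Y_0'(N)}_\Q$ is ever seen model-theoretically, the reciprocity lemma delivers exactly the $\psl2(\hat\Z)$-action matching the StandardFibres axiom, so there is no clash. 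Once these points are pinned down, the equality of types is routine from quantifier elimination.
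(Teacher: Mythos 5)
There is a genuine gap, and it sits at the heart of your construction: you assert that the StandardFibres axiom ``holds in $\M$ and hence in any substructure.'' It does not hold in $\M$. By construction the non-special part of the $H$-sort of $\M$ consists of the fibres of $\tilde{p}:\tc\ra\C$, and each such fibre is a $\pi_1'=\psl2(\hat{\Z})$-torsor, hence strictly larger than a single $\psl2(\Z)$-orbit; this is exactly why $\M$ is $\omega$-saturated and why the paper immediately afterwards observes that ``the property of a model having non-standard fibres may be expressed by realising a certain 2-type'' (realised in $\M$, never in a model of SF), and why the proposition must be restricted to \emph{strongly} $G$-independent tuples at all. Moreover your repair via the closure operator makes things worse rather than better: $\cl_{\M}=j^{-1}\circ\acl\circ j$ pulls back \emph{entire} fibres of $\hat{j}$, so $\cN=\cl_{\M}(\tilde{\tau}_1,\dots,\tilde{\tau}_r)$ still contains whole $\psl2(\hat{\Z})$-torsors above every point of $\acl(j(\bar{\tau}))$ and therefore also fails SF. (SF is indeed universal and so would pass to substructures --- but only from an ambient model that satisfies it.)

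The missing idea is the thinning step that the paper's proof performs. One keeps the $G$-orbit $G\bar{\tau}$ intact (within which the $j$-fibres are automatically single $\sl2(\Z)$-orbits, by first-order consequences of $\Th(j)$ for non-special points), takes the field-theoretic algebraic closure of $j(G\bar{\tau})$ and its $j$-preimage, and then, \emph{outside} $G\bar{\tau}$, observes that this preimage breaks into disjoint $G\cdot\psl2(\hat{\Z})$-orbits: one chooses a single representative in each and closes under $G$ only. This forces SF by fiat, and the only thing left to verify --- which by the completeness/quantifier-elimination argument is all one needs --- is that the axioms $\Psi_{(g_1,\dots,g_n)}$ (surjectivity of $x\mapsto(j(g_1x),\dots,j(g_nx))$ onto the curves $V_{(g_1,\dots,g_n)}$) survive the thinning, which they do because a single $G$-orbit already surjects onto its whole Hecke orbit. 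Your final step, that $\tp(\bar{\tau})$ is preserved because it is determined via \ref{fielddet} by the field type of the Hecke orbit and strong $G$-independence prevents SF from imposing new relations among the $\tau_i$, is correct in spirit and is exactly the role \ref{fielddet} plays in the paper; but it only becomes available once the model has actually been built, and your candidate $\cN$ is not a model of $\Th(j)+\textrm{SF}$.
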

\begin{proof}
We construct $\cN \models \Th(j) + SF$ using our axiomatization of $\Th(j)$ as follows: Take $j(G \bar{\tau})$, now take the field theoretic algebaic closure of this and look at the inverse image under $j$. Outside of $G \bar{\tau}$, this is a set of disjoint $G . \psl2(\hat{\Z})=\GL_2(\A_f)^{ad}$-orbits so choose one element in each orbit, close off under $G$ and take the image under $j$. Clearly this satisfies $SF$ so it remains to show that this is a model of $\Th(j)$. However, we know from the proof of quantifier elimination and completeness of the theory $T$ that we just really need to check that our new model $\cN$ satisfies $\Psi_{\langle g_1,...,g_n \rangle}$ for all $\langle g_1,...,g_n \rangle \subset G$ (and it clearly does). The type of the special points is fixed in every model since they are in $\dcl(\emptyset)$ (and non-standard special points don't appear since each special matrix fixes a unique $s \in \H$ and the theory knows this).
\end{proof}
Note that the stronger statement that if $\bar{\tau}$ is a $G$-independent tuple then we can realize $\tp(\bar{\tau})$ in a model of $\Th(j) + SF$ isn't true. To see this consider the type of $\bar{\tau} = \langle \tau_1, \tau_2 \rangle$ such that $j(\tau_1)=j(\tau_2)$ but $\tau_1 \notin \sl2(\Z) \tau_2$. \newline

So since a type in the pro-\'etale cover $\M$ is realised in a model of $\Th(j) + SF$, if the standard model $\C_j$ is the unique model of $\Th(j) + SF$ of cardinality continuum, then all 1-types realised in $\M$ are actually realised in $\C_j$. So we may interpret the categoricity statement as saying that when considered as structures in this language, the analytic universal cover actually contains all the information about 1-types contained in the pro-\'etale cover. However as we saw above, the property of a model having non-standard  fibres may be expressed by realising a certain 2-type.

\begin{pro}\label{number types}
Let $\bar{\tau} \in \H$ be independent. Then the number of types tuples $\bar{\tau}'$, realisable in models of $\Th(j) + SF$ such that $j'(\bar{\tau}')=j(\bar{\tau})$, is either finite or $2^{\aleph_0}$.
\end{pro}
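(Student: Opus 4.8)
The plan is to exhibit the set $X$ of all such types as a countable inverse limit of finite sets along a system whose transition maps are \emph{homogeneous} (all fibres of a given map have the same cardinality), and then to invoke the elementary fact that such an inverse limit is either finite or of cardinality $2^{\aleph_0}$.

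First I would describe $X$ level by level. Since $\bar\tau\subset\H$ is $G$-independent, no $\tau_i$ lies in the Hecke orbit of $\tau_j$ for $i\neq j$, so the curves $E_{j(\tau_1)},\dots,E_{j(\tau_n)}$ are pairwise non-isogenous and the Hecke correspondences appearing in the values $j'(g\bar\tau')$ act within each coordinate separately, the level-$N$ values for coordinate $i$ corresponding to $\CycSub(E_{j(\tau_i)},N)$ via \ref{definable cyclic}. By \ref{fielddet}, a type in $X$ — the equality $j'(\bar\tau')=j(\bar\tau)$ being held fixed — is determined by the system of loci $\Loc\big((j'(g_1\bar\tau'),\dots,j'(g_m\bar\tau'))/\Q(j(S))\big)$ over all finite $\{g_1,\dots,g_m\}\subset G$. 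Grouping these by level gives, for each $N$, a set $X_N$ recording the level-$N$ data, namely the set of $\Q(j(S))$-irreducible components of the fibre over $j(\bar\tau)$ of the finite cover $\prod_i Y_0'(N)\to\C^n$; this is a \emph{finite} set, the transition maps $X_M\twoheadrightarrow X_N$ for $N\mid M$ are surjective, and $X=\varprojlim_N X_N$. (That every compatible thread is actually realised by some $\bar\tau'$ in a model of $\Th(j)+SF$ uses that $\M$ is $\omega$-saturated, so the thread is realised by some $\tilde{\bar\tau}'\in\M$ whose $j$-values equal those of $\bar\tau$ and are therefore strongly $G$-independent, whence $\tp(\tilde{\bar\tau}')$ is realised in a model of $\Th(j)+SF$ by the preceding proposition; conversely every such type yields a thread by \ref{fielddet}.)

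Next I would prove homogeneity: for $N\mid M$ every fibre of $X_M\to X_N$ has the same cardinality. Here I use the Galois theory of Section \ref{secgalcl}: the fibre of the pro-\'etale cover $\tc$ above a point of $\C$ is a $\pi_1'$-torsor, so the level-$N$ datum for coordinate $i$ lives in a torsor under $\psl2(\Z/N\Z)$, and $X_N$ is the quotient of the resulting transitive $\prod_i\psl2(\Z/N\Z)$-set by the action of $\Gal(\overline{\Q(j(S))(j(\bar\tau))}/\Q(j(S))(j(\bar\tau)))$. Since the deck group acts transitively on each fibre of $\prod_i Y_0'(M)\to\prod_i Y_0'(N)$, it permutes the fibres of $X_M\to X_N$ transitively among themselves, so they are equinumerous. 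The discrepancy between $\psl2$ and $\pgl2$ for composite $N$ and the precise identification of $\dcl(\emptyset)\cap F$ with $\Q(j(S))$ have to be tracked here, as does the case where some $E_{j(\tau_i)}$ has complex multiplication — where the relevant Galois image is abelian, but the level-$N$ object is still a homogeneous space and the argument goes through.

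Finally, the dichotomy: if there is an $N_0$ with $X_M\to X_{N_0}$ bijective for all $M$ divisible by $N_0$, then $X\cong X_{N_0}$ is finite; otherwise, by homogeneity, there are cofinally many $M$ for which the map to the previous level has all fibres of size $\geq 2$, and building a binary tree of partial threads along such a cofinal chain gives $|X|\geq 2^{\aleph_0}$, while $|X|\leq\prod_N|X_N|\leq 2^{\aleph_0}$. I expect the homogeneity step to be the main obstacle: one must set $X_N$ up as a quotient of a transitive finite group-set so that the transitive (deck) group genuinely permutes the fibres of the transition maps, which is exactly where the content of Section \ref{secgalcl} — that $Y_0'(N)\to Y_0'(1)$ is Galois with group $\psl2(\Z/N\Z)$, so its fibres are torsors — is needed, together with the bookkeeping around $\dcl(\emptyset)$, the $SF$-constraint and possible CM coordinates. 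No input from the Mumford--Tate results is required for the dichotomy itself; those results only determine which alternative holds, and it is the failure of the finite alternative that the following argument extracts from a failure of $\aleph_1$-categoricity.
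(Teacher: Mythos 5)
Your proposal is correct and follows essentially the same route as the paper: both arguments organise the types into a tree (inverse system) of finite levels indexed by $N$ and deduce the finite-or-$2^{\aleph_0}$ dichotomy from the fact that the tree branches homogeneously. The only difference is cosmetic — you establish homogeneity by noting that the transition maps $X_M\to X_N$ are equivariant maps of transitive deck-group sets, whereas the paper carries one branch onto another via a $\Q(j(\bar\tau))$-definable function coming from \ref{definable cyclic}; these are two phrasings of the same underlying covering-space/Galois fact.
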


\begin{proof} 
For a tuple $\bar{z} \subset \C$ and a subfield $K$ we let $\Loc(\bar{z} / K)$ be the minimal algebraic variety over $K$ containing $\bar{z}$ i.e. the `Weil locus'.  Consider $\bar{\tau}, \bar{\tau}' $ in an $\omega$-saturated model such that $j(\bar{\tau}) = j(\bar{\tau}')$, and suppose that 
$$\Loc(j(G_N \bar{\tau}) / \Q(j(\bar{\tau}))) \neq \Loc(j(G_N \bar{\tau}' ) / \Q(j(\bar{\tau}))).$$ 
As notation, now write $\Loc(X)$ for $\Loc(X / \Q(j(\bar{\tau})))$. We imagine a tree where branches are types of tuples $\bar{\tau}'$ such that $j(\bar{\tau}')=j(\bar{\tau})$.
 Now also suppose there is $\bar{\tau}''$ such that $$\Loc(j(G_N \bar{\tau}'')) = \Loc(j(G_N \bar{\tau})) 
\textrm{ but } \Loc(j(G_Nm \bar{\tau}))\neq \Loc(j(G_Nm \bar{\tau}'')).$$ We claim that there exists $\bar{\tau}'''$ such that $$\Loc(j(G_N \bar{\tau}'))=\Loc(j(G_N \bar{\tau}''')) \textrm{ and } \Loc(j(G_{Nm}\bar{\tau}'))\neq \Loc(j(G_{Nm} \bar{\tau}''')):$$ By \ref{definable cyclic} and \ref{definable etale}, there is a $\Q(j(\tau))$-definable function sending $\Loc(j(G_N \tau))$ to \linebreak $\Loc(j(G_N \tau'))$, and $\Loc(j(G_Nm \tau''))$ follows $\Loc(j(G_Nm \tau))$ across onto the other branch of the tree, proving the claim. Now we're done since our tree of types branches homogeneously.
\end{proof}

\begin{thm}[Keisler]
If an $\cL_{\omega_1 , \omega}$-sentence has less than the maximum number of models of cardinality $\aleph_1$ (e.g. is $\aleph_1$-categorical) then there are only countably many $\cL_{\omega_1 , \omega}$-types realisable over $\emptyset$.
\end{thm}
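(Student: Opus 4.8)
The statement is a classical theorem of infinitary model theory (this is Keisler's theorem, proved in his \emph{Model Theory for Infinitary Logic}), so the plan is to invoke it; but for completeness I sketch how the argument runs. I would prove the contrapositive: if an $\cL_{\omega_1,\omega}$-sentence $\phi$ realises uncountably many $\cL_{\omega_1,\omega}$-types over $\emptyset$, then $\phi$ has $2^{\aleph_1}$ pairwise non-isomorphic models of cardinality $\aleph_1$, which is the largest possible number. The argument has two stages: (A) promote ``uncountably many types'' to a perfect binary tree of pairwise inconsistent types, carried by a coherent tree of countable models; and (B) fuse that tree of models along a chain of length $\omega_1$ in $2^{\aleph_1}$ genuinely different ways, so that each resulting model of size $\aleph_1$ internally encodes its ``recipe''.

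For (A) I would run exactly the ``homogeneously branching tree of types'' argument already used in the proof of Proposition~\ref{number types}. By downward L\"owenheim--Skolem for $\cL_{\omega_1,\omega}$ every realised type is realised in a countable model; one fixes a finite arity $n$ in which uncountably many types occur and, after a standard Scott-analysis reduction, a countable fragment $L_A\ni\phi$ big enough to support the construction. Calling an $L_A$-formula $\psi(\bar v)$ (consistent with $\phi$) \emph{large} when uncountably many complete $L_A$-types realised in models of $\phi$ contain it, the key point is that every large $\psi$ splits into two large formulas $\psi\wedge\chi$ and $\psi\wedge\neg\chi$: otherwise the set $q=\{\chi:\psi\wedge\chi\ \text{is large}\}$ would, using closure of $\cL_{\omega_1,\omega}$ under countable conjunction and the regularity of $\aleph_1$, be a single complete type absorbing all but countably many of the types extending $\psi$, contradicting largeness. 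Iterating yields $(\psi_s)_{s\in 2^{<\omega}}$ with each $\psi_s$ large and $\psi_{s0}\wedge\psi_{s1}$ inconsistent, hence $2^{\aleph_0}$ pairwise inconsistent complete types $(p_\eta)_{\eta\in 2^{\omega}}$; a further application of the omitting types theorem for the countable fragment $L_A$ upgrades this to a tree $(N_s)_{s\in 2^{<\omega}}$ of countable models of $\phi$, $L_A$-elementary along $\subseteq$, with $N_s\models\psi_s$.

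For (B) I would fix an injection $\alpha\mapsto\eta_\alpha$ of $\omega_1$ into $2^{\omega}$ (possible since $\aleph_1\le 2^{\aleph_0}$), and for each $X\subseteq\omega_1$ build $M_X=\bigcup_{\beta<\omega_1}M^\beta_X$ as an $L_A$-elementary chain of countable models of $\phi$: at stage $\beta$ one amalgamates the previous model with a fresh copy of an appropriate branch model so that the distinguished type realised ``at height $\beta$'' records whether $\beta\in X$, while continuing to omit the complementary branch types of all earlier heights --- this is where one uses that at each countable stage only countably many types are in play, so that the $L_A$-omitting types theorem applies. The construction is arranged so that $X=\{\beta<\omega_1:\ M_X\ \text{realises}\ p_{\eta_\beta}\}$; since the family of $\cL_{\omega_1,\omega}$-types realised in a structure is an isomorphism invariant, $M_X\cong M_Y$ forces $X=Y$, and $|\mathcal{P}(\omega_1)|=2^{\aleph_1}$ then yields the maximal number of models of cardinality $\aleph_1$.

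The step I expect to be the main obstacle is the bookkeeping in (B): ensuring the encoding is genuinely isomorphism-invariant --- that $X$ really can be recovered from the bare isomorphism type of $M_X$ --- which demands precise control, all the way up the $\omega_1$-chain and through every limit stage, of which $p_\eta$ get realised, together with the arrangement (by splitting the chain finely enough) that no more than countably many types must be omitted simultaneously. In Keisler's treatment this is handled by a forcing-style construction of generic models in which independent genericity at $\omega_1$-many places is what pulls the models apart. Since the result is needed here only as a black box, I would in the end simply invoke it in the form stated.
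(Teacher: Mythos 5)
The paper states this result as a black box with no proof, simply attributing it to Keisler, and your plan to ``invoke it in the form stated'' is exactly what the paper does. Your sketch of the underlying argument --- reducing to a countable fragment by a Scott-style closure, extracting a perfect binary tree of pairwise inconsistent large formulas, and then coding subsets of $\omega_1$ into which branch types are realised along an $\omega_1$-chain to produce $2^{\aleph_1}$ non-isomorphic models --- is a faithful outline of Keisler's classical proof, with the genuinely delicate step (the omitting-types bookkeeping along the uncountable chain) correctly identified.
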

So it is really a stability condition coming from a restriction on the number of models, and not something specific to categoricity that we are going to translate into arithmetic geometry (in particular an instance of the Mumford-Tate conjecture), however categoricity becomes more significant when we show that Mumford-Tate implies categoricity in the next section.\newline

Now by Keisler's theorem, if we want $Th(j)+ SF$ to be $\aleph_1$-categorical, then there must be finitely many types as in \ref{number types} above. Let $\tilde{p}:\tc \ra \C$ be the projection. Then, as we noted above, the fibre $\tilde{p}^{-1}(j(\tau)) \subset \tc$ above the point $j(\tau)$ is a $\pi_1'$-torsor and we have a homomorphism
$$\rho: \Gal(\C / \Q(j(\tau))) \lora \pi_1'.$$
Now if Galois doesn't conjugate two elements in the fibre, then they correspond to two distinct types, so if we want there to be finitely many types then the image of $\rho$ must be of finite index in $\pi_1'$. Looking at the fibre of a tuple, we get:

\begin{thm}
If $\Th(j)+SF$ is $\aleph_1$-categorical then for all non-CM $\tau_1,...,\tau_n$, the image of the homomorphism
$$\rho: \Gal(\C / \Q(j(\tau),...,j(\tau_n))) \lora \pi_1'^n$$
is of finite index.
\end{thm}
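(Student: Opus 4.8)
The plan is to run exactly the argument sketched just before the statement, but now at the level of a tuple $\bar\tau=(\tau_1,\dots,\tau_n)$ of pairwise-inequivalent non-CM points rather than a single point, and to extract from $\aleph_1$-categoricity (via Keisler's theorem, already quoted) a finiteness constraint on the number of $\cL_{\omega_1,\omega}$-types over $\emptyset$. First I would reduce to the case where $\bar\tau$ is $G$-independent: by the definition of $\Loc$ and Proposition~\ref{number types}, if any $\tau_i$ is special or two of them are Hecke-related we are in a situation with fewer essential coordinates, so without loss of generality $\bar\tau$ is independent (in a model of $\Th(j)+SF$, independent and strongly $G$-independent coincide). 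Next, for such $\bar\tau$, Lemma~\ref{reciprocity} attaches to $\bar\tau$ a point $\tilde\tau$ in the $n$-fold fibre product of $\tc$ over $\C$, and the fibre $\tilde p^{-1}(j(\bar\tau))$ in $\tc^{\times_\C n}$ is a $\pi_1'^{\,n}$-torsor carrying a commuting $\Gal(\C/\Q(j(\tau_1),\dots,j(\tau_n)))$-action; this yields the homomorphism $\rho$ in the statement.

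The heart of the argument is the dictionary "Galois orbit in the fibre $\leftrightarrow$ $\cL_{\omega_1,\omega}$-type over $\emptyset$". Concretely, I would argue: by Proposition~\ref{fielddet} (and the remark following the proof of quantifier elimination, giving $\dcl(\emptyset)=S\cup\Q(j(S))$), the $\cL_{\omega_1,\omega}$-type of a tuple $\bar\tau'$ with $j'(\bar\tau')=j(\bar\tau)$ is determined by the family of loci $\Loc(j(G_N\bar\tau')/\Q(j(S))(j(\bar\tau)))$, equivalently by the $\Gal$-orbit of the corresponding point of $\tilde p^{-1}(j(\bar\tau))\subset\tc^{\times_\C n}$. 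Two points of the fibre not conjugate under $\operatorname{im}\rho$ give distinct loci at some finite level $N$, hence distinct types; and every point of the fibre is realised as some $\bar\tau'$ in a model of $\Th(j)+SF$ by the construction in the proposition preceding Proposition~\ref{number types}. Therefore the number of types in Proposition~\ref{number types} equals the number of $\operatorname{im}\rho$-orbits on the $\pi_1'^{\,n}$-torsor, which is $[\pi_1'^{\,n}:\operatorname{im}\rho]$ when finite and is $2^{\aleph_0}$ otherwise (this is the dichotomy of Proposition~\ref{number types}). Keisler's theorem then forces the finite alternative, i.e. $\operatorname{im}\rho$ has finite index in $\pi_1'^{\,n}$.

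The main obstacle I anticipate is making the $n$-coordinate version of the type/orbit correspondence completely rigorous: one must check that the loci $\Loc(j(G_N\bar\tau')/\cdot)$ genuinely separate Galois orbits (this is where the non-CM, pairwise non-isogenous hypothesis and the results of Section~\ref{secgalcl} — in particular that $\Aut_{\Fet}(Y_0'(N)/Y_0'(1))\cong\psl2(\Z/N\Z)$ and Proposition~\ref{definable cyclic} — are used to rule out any coincidence of loci for non-conjugate points), and that no extra identifications of types are introduced by passing between $\omega$-saturated models of $\Th(j)$ and models of $\Th(j)+SF$. Once the one-coordinate case (the preceding theorem) is in hand, the extension to tuples is essentially formal: the fibre over $j(\bar\tau)$ in $\tc^{\times_\C n}$ is the product of the one-coordinate fibres, $\pi_1'^{\,n}$ acts coordinatewise, and finite index is inherited. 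I would close by remarking that the conclusion is precisely the "open image" input that, together with Theorem~\ref{amt} and its corollaries, will be shown in the next section to also be sufficient for categoricity.
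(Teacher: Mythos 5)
Your proposal is correct and follows essentially the same route as the paper's own (rather compressed) argument: Keisler's theorem plus the finite-or-$2^{\aleph_0}$ dichotomy of the type-counting proposition forces finitely many types over $j(\bar\tau)$, and the identification of types with Galois orbits on the $\pi_1'^{\,n}$-torsor fibre then yields finite index of $\operatorname{im}\rho$. Your added care about the $n$-coordinate torsor and the reduction to strongly $G$-independent tuples is a reasonable fleshing-out of details the paper leaves implicit.
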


\section{Sufficient conditions}\label{secsuf}





\begin{lem}[$\omega$-homogeneity over countable models and $\dcl(\emptyset)$]\label{homo}
Let
$$\cM = \langle F,H,j  \rangle \textrm{ and } \cM' = \langle F,H',j'  \rangle$$
be models of $T+SF$. Let $k \subset \C$ be a finitely generated extension of a countable algebraically closed field or a finitely generated extension of $\Q(j(S))$, and let $\bar{h}=\{h_1,...,h_n \}\subset H$ and $\bar{h'} \subset H'$ be such that $\qftp(\bar{h}/k)=\qftp(\bar{h'}/k)$. Then for all non-special $\tau \in H$ there is $\tau' \in H'$ such that $\qftp(\bar{h}\tau /k)=\qftp(\bar{h'}\tau' /k)$.
\end{lem}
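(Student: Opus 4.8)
The plan is to run a back-and-forth/type-extension argument that upgrades the quantifier-elimination machinery of Proposition~\ref{QE} to work over the countable base $k$ rather than over $\dcl(\emptyset)$. The key point is that the quantifier-free type of a non-special $\tau$ over $k(\bar h)$ is, by Proposition~\ref{fielddet}, completely determined by the field type of its Hecke orbit, i.e.\ by the union over all finite $\{g_1,\dots,g_n\}\subset G$ of the loci $\Loc\big((j(g_1\tau),\dots,j(g_n\tau))/k(j(G\bar h))\big)$. So it suffices to produce $\tau'\in H'$ whose Hecke orbit satisfies, over the corresponding base $k(j(G\bar h'))$, exactly these loci. Since $\qftp(\bar h/k)=\qftp(\bar h'/k)$, the isomorphism type of the base field extension is the same on both sides, so the combinatorial data to be matched is literally the same system of varieties.

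First I would reduce, as in Proposition~\ref{QE}, to the case where $(\bar h,\tau)$ is $G$-independent (and in a model of $SF$, hence strongly $G$-independent), the special and Hecke-related cases being forced. Then, for each finite $\{g_1,\dots,g_n\}\subset G$, I would invoke Corollary~\ref{realise}: the relevant locus is a subvariety of the geometrically irreducible curve $V_{(g_1,\dots,g_n)}$ of Lemma~\ref{lem_locus_irreducible}, this curve is defined over $\Q(j(S))\subseteq k$, and the map $z\mapsto(j'(g_1z),\dots,j'(g_nz))$ is surjective onto it in $\cM'$ (that surjectivity is part of $\Th(j)$); hence the subvariety is hit. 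This shows every finite chunk of the desired type of $\tau'$ over $k(\bar h')$ is realised in $\cM'$. By compactness the full type is consistent; one then needs a saturation/homogeneity input to realise it. Since $\cM'$ need not be $\omega$-saturated here, I would instead use $\omega$-homogeneity of models of $T+SF$ over countable subsets — which follows from quantifier elimination (the corollary to Proposition~\ref{QE}) plus completeness and the fact that $k$ is countably generated over $\dcl(\emptyset)$ — or, alternatively, embed both models into the $\omega$-saturated $\M$ of the previous section, realise $\tau'$ there, and pull back using that the $G.\psl2(\hat\Z)$-orbit structure of fibres is the same. The only subtlety is bookkeeping the base: we are working over $k(j(G\bar h))$, not just $\dcl(\emptyset)$, so one must check the loci involved are still cut out over a field isomorphic to the corresponding one on the $\cM'$ side, which is exactly what $\qftp(\bar h/k)=\qftp(\bar h'/k)$ gives.

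The main obstacle I anticipate is the compatibility of the infinitely many finite-level conditions as $N\to\infty$: realising each $\Loc(j(G_N\tau)/k(j(G\bar h)))$ separately is easy, but one must ensure the resulting type is a coherent type of a single element, i.e.\ that the inverse limit of the branch choices is nonempty. This is handled by compactness for the consistency and then by the adelic Mumford–Tate input of Corollary~\ref{amtapp} (applied over the base $L=k(\tor(A_{j(\bar h)}))$, so that $E=E_{j(\tau)}$ has open image over $L$, giving no unexpected collapsing of the tree of loci) together with $\omega$-homogeneity to actually pick the realisation. In short, the proof is: reduce to the independent case, realise each finite Hecke-level locus via Corollary~\ref{realise}, assemble by compactness plus Corollary~\ref{amtapp}, and realise in $\cM'$ by $\omega$-homogeneity over $k$.
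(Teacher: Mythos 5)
Your overall skeleton (reduce to the $G$-independent case, match finite Hecke-level loci via Lemma~\ref{lem_locus_irreducible}/Corollary~\ref{realise}, and bring in Corollary~\ref{amtapp} over the base $L=k(\tor(A_{j(\bar h)}))$) is the right one, but the step where you actually produce $\tau'$ has a genuine gap. After compactness gives consistency of the full type, you propose to realise it either by ``$\omega$-homogeneity of models of $T+SF$ over countable subsets, which follows from quantifier elimination plus completeness,'' or by embedding into the $\omega$-saturated $\M$ and pulling back. Neither works as stated: quantifier elimination and completeness of a first-order theory do not make its arbitrary models $\omega$-homogeneous over countable parameter sets --- that property is essentially the statement of the lemma being proved, so invoking it is circular --- and realising the type in $\M$ gives no realisation in the given $\cM'$, which need not be saturated. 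The whole difficulty of the lemma is precisely that $\cM'$ is arbitrary, so a consistent but non-isolated type cannot simply be realised.

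The missing idea is the actual role of Corollary~\ref{amtapp}: it is not there to keep the ``tree of loci from collapsing'' or to make an inverse limit nonempty, but to show that the type of $\tau$ over $L$ is \emph{isolated} by a finite amount of data. Concretely, one picks $m$ such that every $\sigma\in\Aut(T(E_{j(\tau)}))\cap\SL_2(\hat{\Z})$ fixing $E_{j(\tau)}[m]$ lies in the image of $\Gal(\bar{L}/L)$; then Galois over $L$ acts transitively on all extensions of the level-$\le m$ Hecke data to the full Hecke orbit, so \emph{any} $\tau'$ with $j'(g\tau')=j(g\tau)$ for all $g\in G_i$, $0\le i\le m$, automatically has the full quantifier-free type required over $k(\bar{h}')$. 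Such a $\tau'$ exists in $\cM'$ by Lemma~\ref{lem_locus_irreducible} alone, since surjectivity onto the curve $V_{(g_1,\dots,g_n)}$ is part of $\Th(j)$ and therefore holds in every model --- no saturation or homogeneity hypothesis is needed. Once you replace ``compactness plus homogeneity'' by ``isolation at level $m$ plus surjectivity at level $m$,'' the argument closes and coincides with the paper's proof.
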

\begin{proof}
The idea is that by Adelic Mumford-Tate (\ref{amt}), all of the information in $\qftp(\bar{h} \tau /k)$ is contained in the field type of a finite subset of the Hecke orbit, and we can find $\tau' \in H'$ which contains this `finite' amount of information by \ref{lem_locus_irreducible} (so the theory is atomic up to the failure of atomicity for $ACF_0$). Note that in general $\tp(ab)$ contains the same information as $\tp(a)\cup \tp(b / a)$. \newline

We may assume that the $h_i$ and $\tau$ are $G$-independent. Let $$K=k(j(h_1),...,j(h_n),j(\tau))$$ 
and
$$L:=K(\tor(A_{j(\bar{h})})) \cong K(\tor(A_{j(\bar{h}')}).$$
By \ref{amtapp} there exists an $m$ such that every automorphism $\sigma \in \Aut(T(E)) \cap \sl2(\hat{\Z})$ which fixes $E_{j(\tau)}[m]$ belongs to $\Gal(\overline{L} / L)$. By Lemma \ref{lem_locus_irreducible} we can find $\tau' \in H'$ such that $j(g \tau) = j(g \tau')$ for all $g \in G_{i}$ ($0 \leq i \leq m$) and we are done.

\end{proof}

\begin{proof}[Proof of \ref{thmcat}]
We appeal to the theory of quasiminimal excellence as in \cite[$\S1$]{kirby2010quasiminimal}. We define a closure operator $\cl:=j^{-1}\circ \acl \circ j$ which (by properties of $\acl$) is clearly a pregeometry with the countable closure property such that closed sets are models of $T+ SF$. Lemma \ref{homo} gives us Condition II.2 (of \cite[$\S1$]{kirby2010quasiminimal}), and all other conditions of quasiminimality (i.e. $0$, I.1, I.2, I.3 and II.1) are then easily seen to be satisfied. $\aleph_1$-categoricity of $T + SF$ follows (by \cite[Corollary 2.2]{kirby2010quasiminimal} for example). \newline

By \ref{homo}, the standard model $\C_j$ with the pregeometry $\cl$ is a quasiminimal pregeometry structure (as in \cite[\S2]{bays2012quasiminimal}) and  $\cK(\C_{j})$ is a quasiminimal class, so by the main result of \cite{bays2012quasiminimal}  (Theorem 2.2), $\cK(\C_j)$ has a unique model (up to isomorphism) in each infinite cardinality, and in particular  $\cK(\C_{j})$ contains a unique structure $\cC_{j}$ of cardinality $\aleph_0$. Let $\cK$ be the class of models of $T+ SF + \trdeg \geq \aleph_0$. It is clear that $\cK(\C_j) \subseteq \cK$ since $\C_j \in \cK$, and to prove the theorem we want to show that $\cK = \cK(\C_{j})$. \newline

By \ref{homo}, $\cK$ has a unique model $\cM$ of cardinality $\aleph_0$. Since $\cK$ is the class of models of an $\cL_{\omega_1,\omega}$-sentence, $\cK$ together with closed embeddings is an abstract elementary class with Lowenheim-Skolem number $\aleph_0$, so by downward Lowenheim-Skolem (in $\cK$), everything in $\cK$ is a direct limit (with elementary embeddings as morphisms) of copies of the unique model of cardinality $\aleph_0$. Finally, all the embeddings in $\cK$ are closed with respect to the pregeometry, so $\cK =\cK(\cM)=\cK(\cC_j)= \cK(\C_{j})$.
\end{proof}

\section{A final remark}
It should be noted that while the structure $\C_j$ is a natural one to study, it turns out that this is not the most natural language for looking at the $j$-function. The terminology and results of this note are ugly. For example, categoricity being equivalent to a slight weakening of the adelic Mumford-Tate conjecture, along with the fact that Galois action on arbitraty products of elliptic curves (i.e. including those with complex multiplication) does not enter into the picture is frustrating. \newline

The essence of what we are doing here, is looking at the universal cover of the curve $\P^1(\C)-\{0,1,\infty \}$, and it would be much more pleasing at this stage if representations went into the full \'etale fundamental group, and that categoricity was equivalent to Galois representations being of open image in the Hodge groups of arbitrary products of elliptic curves. Note that main content of the adelic Mumford-Tate conjecture with respect to the abelian variety is the image being open in the Hodge group, the rest comes from the cyclotomic character, which is really an issue with $\G_m$, so categoricity being equivalent to the representations being open in the Hodge group and not the Mumford-Tate group would be expected. These kinds of issues will be addressed in my upcoming PhD thesis. That being said, I hope that this presentation of the work is worth while, since it paves the way for the construction of a `pseudo $j$-function' in analogy with Zilber's pseudoexponential field.

\newpage


\end{document}